\DeclareMathAlphabet{\mathcal}{OMS}{cmsy}{m}{n}
\newlist{enummarg}{enumerate}{1}
\setlist[enummarg]{label*=\arabic*., align=left, leftmargin=0pt, labelwidth=*, itemsep=3pt}
\newlist{itemmarg}{itemize}{1}
\setlist[itemmarg]{align=left, leftmargin=0pt, labelwidth=*, itemsep=3pt}
\newtheorem{theorem}{Theorem}[section]  
\newtheorem{corollary}[theorem]{Corollary}
\newtheorem{lemma}[theorem]{Lemma}
\newtheorem{proposition}[theorem]{Proposition}
\theoremstyle{definition}
\newcommand\pair[1]{\langle#1\rangle}
\newcommand{\m}{\mathbf} 
\newcommand{\pc}[1]{\ensuremath{{\sf #1}}}	
\newcommand\ld{\backslash}
\newcommand\rd{/}
\newcommand{\ut}{{\mathrm e}}
\newcommand{\zr}{{\mathrm f}}
\newcommand{\jn}{\vee}
\newcommand{\mt}{\wedge}
\newcommand{\pd}{\cdot}
\newcommand{\eq}{\approx}
\newcommand{\vty}[1]{\mathcal{#1}} 
\newcommand{\altnot}{\mathop{\sim}}
\newcommand{\daltnot}{\altnot\altnot}
\newcommand\et{\DOTSB\;\mathrel{\&}\;}
\newcommand*{\seq}{{\vphantom{A}\Rightarrow{\vphantom{A}}}}
\newcommand*{\rseq}{\Rightarrow}
\newcommand*{\pfa}[1]{\mbox{\scriptsize $#1$}} 	
\newcommand{\PI}{\mathrm{\Pi}}
\newcommand{\De}{\mathrm{\Delta}}
\newcommand{\Ga}{\mathrm{\Gamma}}
\newcommand{\idr}{(\textsc{id})}
\newcommand{\cutr}{\textup{\sc (cut)}}
\newcommand{\flr}{(\zr\!\rseq)}
\newcommand{\frr}{(\rseq\!\zr)}
\newcommand{\tlr}{(\ut\!\rseq)}
\newcommand{\trr}{(\rseq\!\ut)}
\newcommand{\olr}{(\jn\!\rseq)}
\newcommand{\orr}{(\rseq\!\jn)}
\newcommand{\alr}{(\mt\!\rseq)}
\newcommand{\arr}{(\rseq\!\mt)}
\newcommand{\pdlr}{(\pd\!\rseq)}
\newcommand{\pdrr}{(\rseq\!\pd)}
\newcommand{\ilr}{(\to\rseq)}
\newcommand{\irr}{(\rseq\to)}
\newcommand{\ldlr}{({\ld}{\rseq})}
\newcommand{\ldrr}{(\rseq\!\!\ld)}
\newcommand{\rdlr}{(\rd\!\rseq)}
\newcommand{\rdrr}{(\rseq\!\!\rd)}
\newcommand{\elr}{(\textsc{el})}
\newcommand{\err}{(\textsc{er})}
\newcommand{\lgpw}{(\LG\textsc{-w})}
\newcommand{\ablgpw}{(\AbLG\textsc{-w})}
\newcommand{\ICRL}{\mathcal{I}c\mathcal{RL}}
\newcommand{\IRL}{\vty{IRL}}
\newcommand{\RL}{\vty{RL}}
\newcommand{\CanRL}{\mathcal{C}an\mathcal{RL}}
\newcommand{\CanIRL}{\mathcal{C}an\mathcal{IRL}}
\newcommand{\GBL}{\mathcal{GBL}}
\newcommand{\CanGBL}{\mathcal{C}an\mathcal{GBL}}
\newcommand{\IGBL}{\mathcal{IGBL}}
\newcommand{\LG}{\vty{LG}} 
\newcommand{\LGm}{\mathcal{LG}^-}
\newcommand{\AbLG}{\vty{A}\mathit{b}\vty{LG}}
\newcommand{\CA}{\vty{CA}}
\newcommand{\SIRM}{\mathcal{S}i\mathcal{RM}}
\newcommand{\GRP}{\vty{G}\mathit{rp}}
\newcommand{\TRIV}{{\vty{T}\!riv}}
\begin{document}

\setcounter{page}{1}     

 




\threeAuthorsTitle{J. Gil\hyp{}F\'erez}{F.\ts M.  Lauridsen}{G. Metcalfe}{Integrally Closed Residuated Lattices}


   
\PresentedReceived{Name of Editor}{Month day, year}


\begin{abstract}
A residuated lattice is said to be {\em integrally closed} if it satisfies the quasiequations\, $xy \leq x \implies y \leq \ut$\, and\, $yx \leq~x \implies y \leq \ut$, or equivalently, the equations\, $x \ld x \eq \ut$\, and\, $x \rd x \eq \ut$. Every integral, cancellative, or divisible residuated lattice is integrally closed, and, conversely, every bounded integrally closed residuated lattice is integral. It is proved that the mapping $a \mapsto (a \ld \ut)\ld \ut$ on any integrally closed residuated lattice is a homomorphism onto a lattice\hyp{}ordered group. A Glivenko\hyp{}style property is then established for varieties of integrally closed residuated lattices with respect to varieties of lattice\hyp{}ordered groups, showing in particular that integrally closed residuated lattices form the largest variety of residuated lattices admitting this property with respect to lattice\hyp{}ordered groups. The Glivenko property is used to obtain a sequent calculus admitting cut\hyp{}elimination for the variety of integrally closed residuated lattices and to  establish the decidability, indeed PSPACE-completenes, of its equational theory. Finally, these results are related to previous work on (pseudo) BCI\hyp{}algebras, semi\hyp{}integral residuated pomonoids, and Casari's comparative logic.
\end{abstract}

\Keywords{Residuated lattice, lattice\hyp{}ordered group, Glivenko property, proof theory, BCI\hyp{}algebra, semi-integral residuated pomonoid, comparative logic.}



\section{Introduction} 

A {\em residuated lattice\hyp{}ordered monoid} ({\em residuated lattice} for short) is an algebraic structure $\m A=\pair{A,\mt,\jn,\pd,\ld,\rd,\ut}$ of type $\langle 2,2,2,2,2,0 \rangle$ such that $\langle A,\mt,\jn \rangle$ is a lattice, $\langle A,\pd,\ut \rangle$ is a monoid, and $\ld,\rd$ are left and right residuals of $\pd$ in the underlying lattice order, i.e., for all $a,b,c\in A$,
\[
b \le a \ld c \iff  ab \le c  \iff a \le c \rd b.
\]
These structures form a variety (equivalently, equational class) $\RL$ and provide algebraic semantics for substructural logics, as well as encompassing well\hyp{}studied classes of algebras such as lattice\hyp{}ordered groups ($\ell$-groups for short) and lattices of ideals of rings with product and division operators (see, e.g.,~\cite{JT02,BT03,GJKO07,MPT10}).

It is proved in~\cite{BCGJT03} that a residuated lattice $\m{A}$ is {\em cancellative} --- i.e., satisfies the monoid quasiequations $xy \eq xz \implies y\eq z$ and $yx \eq zx \implies y\eq z$ --- if and only if it satisfies the  equations $x \ld xy \eq y$ and $yx \rd x \eq y$. Cancellative residuated lattices hence form a variety $\CanRL$   that subsumes the varieties of $\ell$-groups $\LG$ and their negative cones $\LGm$, but excludes many important residuated lattices studied in logic (e.g., all non\hyp{}trivial Brouwerian algebras). In~\cite{MT10} it is proved that varieties of cancellative residuated lattices satisfying a further condition are categorically equivalent to varieties of lattice\hyp{}ordered groups with a co\hyp{}nucleus. Despite this rich structural theory, however, no analytic (cut\hyp{}free) proof system is known for $\CanRL$ and the decidability of its equational theory is still an open problem. The same issues arise also for varieties of cancellative residuated lattices satisfying one or both of the equations $xy \eq yx$ (commutativity) and  $x \le \ut$ (integrality).

In this paper we study residuated lattices that satisfy a weaker cancellation property and are closely related to other algebras for non\hyp{}classical logics, including (pseudo) BCI\hyp{}algebras~\cite{Ise66,KK92,KB06,DY08}, semi-integral residuated pomonoids~\cite{RvA00,EK18},  Dubreil\hyp{}Jacotin semigroups~\cite[Chap.~12--13]{Bly05}, and algebraic semantics for Casari's comparative logic~\cite{Cas87,Cas89,Cas97,Pao00a,Met06}. Following Fuchs~\cite[Chap.~XII.3]{Fuc63}, a residuated lattice $\m{A}$ is said to be {\em integrally closed} if it satisfies the (ordered monoid) quasiequations\, $xy \leq x \implies y \leq \ut$\, and\, $yx \leq x \implies y \leq \ut$, or equivalently, the equations $x \ld x \eq \ut$ and $x \rd x \eq \ut$. If $\m{A}$ is conditionally complete (i.e., every upper-bounded non-empty subset of $A$ has a least upper bound), then being integrally closed is equivalent to being integrally closed in the ordered group sense, namely, $a^n \le b$ for all $n \in \mathbb{N}{\setminus}\{0\}$ implies $a \le \ut$ (see~\cite[Chap.~XII.3]{Fuc63} for details).

Let us denote the variety of integrally closed residuated lattices by $\ICRL$. Clearly, every cancellative residuated lattice belongs to $\ICRL$. This is also the case for any {\em integral} residuated lattice; indeed, any bounded integrally closed residuated lattice is integral. Since any product of an integral residuated lattice and an $\ell$-group is integrally closed,  by~\cite[Cor.~5.3]{GT05}, this is the case in particular for all {\em GBL\hyp{}algebras}, residuated lattices satisfying the divisibility property: if $a \le b$, then there exist $c,d$ such that $a=cb=bd$. Like $\CanRL$, the variety $\GBL$ of GBL\hyp{}algebras has a rich structure theory (see, e.g.,~\cite{GT05,MT10}), but no analytic (cut\hyp{}free) proof system is known and the decidability of its equational theory is open.  Figure~\ref{figVarRL} depicts relationships between these and other varieties of integrally closed residuated lattices, using the prefixes $\mathcal{C}an$ and $\mathcal{I}$, respectively, to denote the cancellative and integral members of a variety, and $\TRIV$ to denote the trivial variety.

\begin{figure}[t] 
{\small
\begin{center}
\begin{tikzcd}[column sep = small, row sep = normal, arrows = dash]
{} & {\RL} & {} \\
{} & {\ICRL}\ar[u] & {} \\
{\IRL}\ar[ur] & {\GBL}\ar[u]\ar[rd] & {\CanRL}\ar[ul] \\
{\IGBL}\ar[u]\ar[ur] & {\CanIRL}\ar[ul, crossing over]\ar[ur, crossing over] & {\CanGBL}\ar[u] \\
{} & {\LGm}\ar[ul]\ar[u]\ar[ur] & {\LG}\ar[u] \\
{} & {\TRIV}\ar[u]\ar[ur] & {}
\end{tikzcd}
\end{center}}
\caption{Varieties of Integrally Closed Residuated Lattices}
\label{figVarRL}
\end{figure}

In Section~\ref{Sec:structure}, we prove that the mappings $a \mapsto a \ld \ut$ and $a \mapsto \ut \rd a$ coincide for any integrally closed residuated lattice $\m{A}$ and that $a \mapsto (a\ld \ut)\ld \ut$ defines a residuated lattice homomorphism from $\m{A}$ onto an $\ell$-group.  We use this result to establish a Glivenko\hyp{}style property (studied for varieties of pointed residuated lattices in~\cite{GO06}) for varieties of integrally closed residuated lattices with respect to varieties of $\ell$-groups (Theorem~\ref{thm:Glivenko:varieties:integrally-closed}), showing in particular that $\ICRL$ is the largest variety of residuated lattices admitting this property with respect to $\LG$ (Corollary~\ref{cor:largestGlivenkoLgroups}).

In Section~\ref{Sec:proof-theory-and-decidability}, we exploit the Glivenko property for $\ICRL$ to obtain a sequent calculus admitting cut\hyp{}elimination by extending the standard sequent calculus for $\RL$ (see, e.g.,~\cite{GJKO07,MPT10}) with a non\hyp{}standard weakening rule. As a consequence, we obtain the decidability,  indeed PSPACE\hyp{}completeness, of the equational theory of  $\ICRL$ (Theorem~\ref{thm:integrally-closed-decidable}). In Section~\ref{Sec:BCIetc}, these results are related to previous work on (pseudo) BCI\hyp{}algebras~\cite{Ise66,KK92,DY08} and semi\hyp{}integral residuated pomonoids~\cite{RvA00,EK18}. In particular, we prove that the equational theory of $\ICRL$ is a conservative extension of the equational theories of these classes (Theorem~\ref{thm:eq-MTRL-cons-ext-SIRM}), noting that the sequent calculus defined for BCI\hyp{}algebras corresponds  to a calculus used to prove decidability in~\cite{KK92}.  Finally, in Section~\ref{Sec:L-Pregroups}, we prove that the equational theory of a variety of algebras for  Casari's comparative logic~\cite{Cas87,Cas89,Cas97,Pao00a,Met06} is a conservative extension of the equational theory  of commutative integrally closed residuated lattices (Theorem~\ref{thm:lpregroupsconservative}). In this case, the sequent calculus corresponds to a system used in~\cite{Met06} to establish the decidability of Casari's comparative logic.


\section{The Structure of Integrally Closed Residuated Lattices} \label{Sec:structure}

In this section we establish some basic facts about the structure of integrally closed residuated lattices. We then use these facts to establish a Glivenko property for varieties of integrally closed residuated lattices with respect to varieties of $\ell$-groups. Let us recall first that every integral residuated lattice is integrally closed, and show that in the presence of a greatest or least element the converse is also true.

\begin{lemma}\label{lem:bounded-MnRL-are-integral}
Any upper or lower bounded integrally closed residuated lattice is integral.  
\end{lemma}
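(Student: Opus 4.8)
The plan is to exploit the defining integral-closure equations $x \ld x \eq \ut$ and $x \rd x \eq \ut$ together with the residuation adjunction, treating the two cases (upper bounded and lower bounded) separately but by dual arguments.

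First I would handle the upper bounded case. Suppose $\m{A}$ has a greatest element $\top$. My goal is to show $a \le \ut$ for every $a \in A$, i.e. that $\top \le \ut$, which by maximality forces $\top = \ut$ and hence $a \le \ut$ for all $a$. The key observation is that from $a \le \top$ for every $a$ we can feed $\top$ into the integral-closure quasiequation. Concretely, I would note that $\top \pd \top \le \top$ holds trivially (since $\top$ is greatest), and this is exactly the hypothesis of the quasiequation $xy \leq x \implies y \leq \ut$ with $x = y = \top$; therefore $\top \le \ut$. Since $\ut \le \top$ always, we get $\ut = \top$, and then every $a \le \top = \ut$, so $\m{A}$ is integral.

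For the lower bounded case I would run the order\hyp{}dual argument, which requires a little more care because residuation is not symmetric under order reversal. Suppose $\m{A}$ has a least element $\bot$. I want to show $\bot \pd a \le \bot$ (equivalently $a \pd \bot \le \bot$) for suitable elements so as to again trigger the quasiequation, and then conclude that the relevant multiplier lies below $\ut$; pushing this through should show that $\bot$ is an absorbing element and ultimately that the algebra collapses appropriately. Alternatively, and more cleanly, I would use the residual formulation: from $\bot \le x$ for all $x$ one obtains via the adjunction that $\bot \ld c = \top$\hyp{}type top elements exist, giving an upper bound, so the lower\hyp{}bounded case reduces to producing a greatest element and invoking the first case. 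The main obstacle I anticipate is precisely this reduction: verifying that a least element together with residuation actually yields a greatest element (so that lower boundedness implies upper boundedness), since this uses that $a \ld \bot$ or $\bot \rd a$ is a maximum of $A$, which must be checked from the adjunction rather than assumed.

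Thus the overall strategy is: (i) show the upper bounded case directly by instantiating the integral\hyp{}closure quasiequation at $\top$; and (ii) reduce the lower bounded case to the upper bounded case by showing that a least element forces the existence of a greatest element. I expect step (i) to be a one\hyp{}line consequence of the quasiequations, while the genuine content lies in the residuation computation of step (ii).
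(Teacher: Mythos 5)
Your plan follows the paper's proof in both halves: the upper\hyp{}bounded case is settled by feeding $\top$ into the integral\hyp{}closure condition, and the lower\hyp{}bounded case is reduced to it by manufacturing a greatest element from $\bot$. The upper\hyp{}bounded half is complete and correct; the paper applies the condition to each $a$ separately ($a \pd \top \le \top$ gives $a \le \top \rd \top = \ut$), whereas you instantiate at $x=y=\top$ and conclude $\top = \ut$, which is an equivalent one\hyp{}liner.

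The lower\hyp{}bounded half is where you stop short of a proof: you correctly identify that the whole content is showing that a least element yields a greatest one, but you leave that verification as an ``anticipated obstacle,'' and the candidate you name is not quite right --- $a \ld \bot$ is not a maximum for arbitrary $a$ (already $\ut \ld \bot = \bot$). The correct element is $\bot \rd \bot$ (equivalently $\bot \ld \bot$), and the check is immediate from the adjunction: for any $a, c \in A$ we have $\bot \le a \ld c$ since $\bot$ is least, hence $a \pd \bot \le c$; taking $c = \bot$ gives $a \pd \bot \le \bot$, so $a \le \bot \rd \bot$ for every $a$, i.e.\ $\bot \rd \bot$ is a greatest element and your case (i) applies. (In fact, once you have $a \pd \bot \le \bot$ you can skip the reduction entirely: integral closure gives $\bot \rd \bot = \ut$, so $a \le \ut$ directly --- this is the sound core of your first, more muddled sketch for this case; the phrase about the algebra ``collapsing'' should be discarded, since nothing collapses.) With that computation supplied, the argument is exactly the paper's.
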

\begin{proof}
Suppose that $\top$ is the greatest element of an integrally closed residuated lattice $\m{A}$. Then $a \pd \top \leq \top$  and hence $a \leq \top \rd \top = \ut$ for all $a \in A$. So $\m{A}$ is integral. Moreover, any residuated lattice with a least element $\bot$ has a greatest element $\bot \rd\bot$, so must also be integral. 
\end{proof} 

\noindent
Since every finite residuated lattice is bounded, we obtain the following description of finite integrally closed residuated lattices.

\begin{corollary}\label{cor:finite-MnRLs-are-integral}
A finite residuated lattice is integrally closed if and only if it is integral.
\end{corollary}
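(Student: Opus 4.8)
The plan is to obtain this as a direct consequence of Lemma~\ref{lem:bounded-MnRL-are-integral}, the only extra ingredient being the elementary observation that every finite residuated lattice is bounded. Indeed, if $\m{A}$ is finite, then the join of all of its finitely many elements is a greatest element (and the meet of all of them is a least element), so $\m{A}$ is in particular upper bounded.

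For the direction from integrally closed to integral, I would argue as follows. Since a finite $\m{A}$ is bounded, Lemma~\ref{lem:bounded-MnRL-are-integral} applies verbatim and yields that $\m{A}$ is integral; no finiteness is used beyond guaranteeing the existence of the bound. For the converse, I would invoke the fact recalled at the start of this section that every integral residuated lattice is integrally closed, which holds with no finiteness hypothesis. Concretely, the residuation law gives $\ut \le x \ld x$ in any residuated lattice (as this is equivalent to $x \le x$), while integrality $x \le \ut$ forces $x \ld x \le \ut$; hence $x \ld x \eq \ut$, and symmetrically $x \rd x \eq \ut$.

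Since both implications reduce to facts already at hand, there is no substantive obstacle: the only step requiring any care—and it is entirely routine—is the verification that finiteness supplies the bound needed to invoke the lemma.
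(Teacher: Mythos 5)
Your proof is correct and follows exactly the paper's route: the paper derives the corollary from Lemma~\ref{lem:bounded-MnRL-are-integral} together with the observation that every finite residuated lattice is bounded, and the converse direction is the fact, recalled at the start of Section~\ref{Sec:structure}, that integral residuated lattices are integrally closed. Your explicit verification that $\ut \le x \ld x$ holds by residuation and is forced down to equality by integrality is a correct filling-in of that standard fact.
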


\noindent
There are integrally closed residuated lattices that are not integral: for example, any non\hyp{}trivial $\ell$-group. The variety generated by all finite integrally closed residuated lattices is hence not $\ICRL$ --- that is, $\ICRL$ does not have the finite model property --- but rather the variety $\IRL$ of integral residuated lattices, which is known to be generated by its finite members~\cite{BvA05}.

A residuated lattice is called  {\em $\ut$\hyp{}cyclic} if the two unary operations $a \mapsto a \ld \ut$ and $a \mapsto \ut \rd a$ coincide. The next result shows that integrally closed residuated lattices have this property and that either one of the defining equations for this variety (relative to $\RL$) suffices to imply the other.

\begin{proposition}\label{prop:ucyclicmon} 
Any residuated lattice satisfying either $x \ld x \eq \ut$ or $x \rd x \eq \ut$ is integrally closed and  $\ut$\hyp{}cyclic. \end{proposition}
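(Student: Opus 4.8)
The plan is to derive both conclusions from the single hypothesis $x \ld x \eq \ut$ and then obtain the case $x \rd x \eq \ut$ by left--right symmetry: passing to the opposite algebra (same lattice, reversed product, and $\ld,\rd$ interchanged) turns a model of $x \rd x \eq \ut$ into a model of $x \ld x \eq \ut$, and carries the two conclusions ``integrally closed'' and ``$\ut$-cyclic'', both of which are self-dual, back to the original. Throughout I would lean only on the residuation laws and on the inequalities $\ut \le a \ld a$ and $\ut \le a \rd a$, valid in every residuated lattice since $a\ut = a$ and $\ut a = a$; hence for each target equation only the direction $\le \ut$ requires work.

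The single device driving everything is the remark that if an element $w$ satisfies $ww \le w$, then $w \le w \ld w = \ut$, using residuation and the hypothesis. So the task in each case is to convert a given one-sided inequality into an element whose square lies below itself. First, to prove $\m{A}$ integrally closed it remains to show $x \rd x \le \ut$: writing $c = x \rd x$, the defining inequality $cx \le x$ gives $c^2 x = c(cx) \le cx \le x$, so $c^2 \le x \rd x = c$, and the device with $w = c$ yields $c \le \ut$.

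For $\ut$-cyclicity I would prove the two inclusions between $a \ld \ut$ and $\ut \rd a$ separately, each time applying the device to a deliberately reversed product. Putting $t = \ut \rd a$, so $ta \le \ut$, the element $at$ satisfies $(at)(at) = a(ta)t \le a\ut t = at$, whence $at \le \ut$ and therefore $\ut \rd a \le a \ld \ut$. Symmetrically, putting $s = a \ld \ut$, so $as \le \ut$, the element $sa$ satisfies $(sa)(sa) = s(as)a \le s\ut a = sa$, whence $sa \le \ut$ and $a \ld \ut \le \ut \rd a$; combining the two gives $a \ld \ut \eq \ut \rd a$.

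I expect the only genuine obstacle to be spotting those reversed products: the element handed to us, e.g.\ $t = \ut\rd a$ with $ta \le \ut$, is the wrong one to feed to the device, and one must notice that it is instead $at$ (with the factors in the opposite order) whose square collapses below itself. Once this reversal is identified, every remaining step is a one-line residuation calculation.
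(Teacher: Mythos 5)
Your proof is correct and follows essentially the same route as the paper's: both derive $x \rd x \eq \ut$ by instantiating the hypothesis at the element $x \rd x$ (your ``device'' $ww \le w \implies w \le w \ld w = \ut$ is exactly the observation that $x\rd x \le (x\rd x)\ld(x\rd x)$), and both establish $\ut$-cyclicity by sandwiching a product between copies of $a$ and applying the hypothesis, with the remaining case handled by the same left--right symmetry. The only cosmetic difference is where the hypothesis is instantiated in the cyclicity step (the paper uses $(a\ld\ut)a \le a\ld a = \ut$ directly, whereas you square the reversed product $at$); both are one-line residuation computations.
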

\begin{proof}
Let $\m{A}$ be a residuated lattice satisfying $x \ld x \eq \ut$, noting that the case where  $\m{A}$ satisfies $x \rd x \eq \ut$ is  symmetrical. Since any residuated lattice satisfies  $x \rd x \eq  (x \rd x) \ld  (x \rd x)$, we have that $\m{A}$ also satisfies $x \rd x \eq \ut$. That is, $\m{A}$ is integrally closed. Now consider any $a\in A$. By residuation, $a (a \ld \ut) \le~\ut$, so  $a (a \ld \ut)a \le a$, giving $(a \ld \ut)a \le a \ld a = \ut$ and hence $a \ld \ut \le \ut \rd a$. But, symmetrically, also  $\ut \rd a \le a \ld \ut$. So $\m{A}$ satisfies $x \ld \ut \eq \ut \rd x$ and is $\ut$\hyp{}cyclic.
\end{proof}

For any $\ut$\hyp{}cyclic residuated lattice $\m{A}$ and $a \in A$, we write $\altnot a$ to denote the common result $a \ld \ut = \ut \rd a$. The next lemma collects some useful properties of this operation for integrally closed residuated lattices.

\begin{lemma}\label{lem:alpha-is-hom}
Any one of the  following equations and quasiequations axiomatizes $\ICRL$ relative to the variety of $\ut$\hyp{}cyclic residuated lattices:
\begin{enumerate}[font=\upshape, label={(\roman*)}]
\item $\altnot(x \ld y) \eq \altnot y \rd \altnot x$
\item $\altnot(y \rd x) \eq \altnot x \ld \altnot y$
\item $x(\altnot x)y \leq \ut \implies y \leq \ut$
\item $y(\altnot x) x  \leq \ut \implies y \leq \ut$.
\end{enumerate}
\end{lemma}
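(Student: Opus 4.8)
The plan is to fix a $\ut$-cyclic residuated lattice $\m{A}$ and show that each of (i)--(iv) is, over $\m{A}$, equivalent to being integrally closed. I would exploit the left--right symmetry of the situation: passing to the opposite monoid turns a $\ut$-cyclic residuated lattice into one of the same kind with the same operation $\altnot$, fixes the class of integrally closed algebras, and interchanges (i) with (ii) and (iii) with (iv). Hence it suffices to establish the cycle
\[
\text{integrally closed}\ \imp\ \text{(iv)}\ \imp\ \text{(i)}\ \imp\ \text{integrally closed},
\]
its mirror image then supplying integrally closed $\imp$ (iii) $\imp$ (ii) $\imp$ integrally closed, so that all four conditions become equivalent to integral closedness. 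Routing (iii) and (iv) back to integral closedness \emph{through} (ii) and (i) is the point of using a cycle: it avoids a direct and awkward derivation of integral closedness from (iii) alone. Throughout I would use the facts, valid in any $\ut$-cyclic residuated lattice, that $a(\altnot a)\le\ut$, $(\altnot a)a\le\ut$, $a\le\altnot\altnot a$, and $\altnot a\ge\ut \iff a\le\ut$ (the last by residuating $\ut\le a\ld\ut$).

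The two endpoints of the cycle are routine. For integrally closed $\imp$ (iii), given $x(\altnot x)y\le\ut$ I would residuate to get $(\altnot x)y\le x\ld\ut=\altnot x$, so that the integrally closed quasiequation $ay\le a\imp y\le\ut$ (with $a=\altnot x$) forces $y\le\ut$; symmetrically one obtains integrally closed $\imp$ (iv). For (i) $\imp$ integrally closed I would instantiate (i) at $y=x$, yielding $\altnot(x\ld x)=\altnot x\rd\altnot x\ge\ut$; since $\altnot w\ge\ut$ forces $w\le\ut$, this gives $x\ld x\le\ut$, and as $x\ld x\ge\ut$ always holds we get $x\ld x\eq\ut$, whence $\m{A}$ is integrally closed by Proposition~\ref{prop:ucyclicmon}.

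The heart of the argument, and the step I expect to be the main obstacle, is (iv) $\imp$ (i). One inequality is in fact free in every $\ut$-cyclic residuated lattice: from $x(\altnot x)y\le y$ one gets $(\altnot x)y\le x\ld y$, and multiplying on the left by $\altnot(x\ld y)$ and using $\altnot(x\ld y)(x\ld y)\le\ut$ yields $\altnot(x\ld y)(\altnot x)y\le\ut$, i.e. $\altnot(x\ld y)\le\altnot y\rd\altnot x$. For the reverse inequality, writing $q=x\ld y$ and $p=\altnot y\rd\altnot x$, I would prove $qp\le\ut$, which is equivalent to $p\le\altnot(x\ld y)$. I would combine three observations: $p(\altnot x)\le\altnot y$ (by residuation), $q(\altnot y)\le\altnot x$ (since $xq(\altnot y)\le y(\altnot y)\le\ut$), and therefore $qp(\altnot x)\le q(\altnot y)\le\altnot x$. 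Right-multiplying the resulting inequality $qp(\altnot x)\le\altnot x$ by $x$ gives $qp(\altnot x)x\le(\altnot x)x\le\ut$, at which point (iv) applies with the left factor $qp$ in the role of its free variable and delivers $qp\le\ut$.

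Assembling the cycle yields integrally closed $\iff$ (i) $\iff$ (iv); the opposite-algebra symmetry yields integrally closed $\iff$ (ii) $\iff$ (iii), and together these prove the lemma. The only genuinely delicate point is locating the chain $qp(\altnot x)\le q(\altnot y)\le\altnot x$ and recognising that, after right-multiplication by $x$, it is hypothesis (iv) rather than (iii) that applies — a choice forced by the side on which the factor $qp$ sits, and the reason the cycle is oriented as it is.
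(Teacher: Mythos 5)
Your proof is correct, but it organizes the equivalences differently from the paper. The paper proves two independent biconditionals --- integrally closed $\iff$ (i) and integrally closed $\iff$ (iii) --- and obtains (ii) and (iv) by the left--right symmetry; you instead close a single cycle, integrally closed $\implies$ (iv) $\implies$ (i) $\implies$ integrally closed, and mirror it to get integrally closed $\implies$ (iii) $\implies$ (ii) $\implies$ integrally closed. The computational core overlaps substantially: both arguments establish the free inequality $\altnot(x \ld y) \leq \altnot y \rd \altnot x$ the same way, and your chain $qp(\altnot x) \leq q(\altnot y) \leq \altnot x$ for $q = x\ld y$, $p = \altnot y \rd \altnot x$ is essentially the paper's inequality $a(a\ld b)(\altnot b \rd \altnot a)(\altnot a) \leq \ut$; the difference is how it is closed off --- the paper invokes $\altnot a \rd \altnot a \eq \ut$ directly from integral closedness, whereas you right-multiply by $x$ and invoke (iv), which is exactly what makes (iv) $\implies$ (i) work. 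What your route buys is that you never have to derive integral closedness from (iii) or (iv) directly; the paper does this via a separate (and somewhat slicker) pair of instantiations, namely $a(\altnot a)(\altnot a \ld \altnot a) \leq \ut$ and $\altnot a (a\rd a) a \leq \ut$, which your cycle renders unnecessary. The price is that your argument for (iv) $\implies$ (i) must carry the full weight of the hard inclusion, and a reader must track which of (iii)/(iv) applies to which side of the product --- a point you correctly identify and resolve. Both proofs are sound; the paper's is marginally more modular, yours marginally more economical in the number of distinct tricks used.
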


\begin{proof}
For (i), let $\m{A}$ be any $\ut$\hyp{}cyclic residuated lattice and consider $a, b \in A$. Since $a(\altnot a)b \leq b$, it follows that $(\altnot a)b \leq a \ld b \leq \altnot\altnot (a \ld b)$ and $\altnot (a\ld b)(\altnot a)b \leq\ut$, yielding $\altnot (a\ld b) \leq  \altnot b \rd \altnot a$. Note also that  $a(a \ld b)(\altnot b \rd \altnot a)\altnot a \le b(\altnot b) \le \ut$. Hence if $\m{A}$ is integrally closed, it follows that $(a \ld b)(\altnot b \rd \altnot a) \le (\altnot a) \rd (\altnot a) = \ut$, yielding $\altnot b \rd \altnot a  \leq \altnot (a \ld b)$; that is, $\m{A}$ satisfies $\altnot(x \ld y) \eq \altnot y \rd \altnot x$. Conversely, if $\m{A}$ satisfies $\altnot(x \ld y) \eq \altnot y \rd \altnot x$, then $a \ld a \leq (\altnot a\rd \altnot a)(a\ld a) = \altnot (a\ld a)(a\ld a) \leq \ut$; that is,  $\m A$ satisfies $x\ld x \eq \ut$ and is integrally closed.  The proof for (ii) is symmetrical.

For (iii), consider first any integrally closed residuated lattice $\m{A}$  and $a,b\in A$. If $a(\altnot a)b \leq \ut$, then $(\altnot a) b\leq \altnot a$ and hence $b \leq \altnot a \ld \altnot a = \ut$; that is, $\m{A}$ satisfies $x(\altnot x)y \leq \ut \implies y \leq \ut$. Suppose next that $\m A$ is an $\ut$\hyp{}cyclic residuated lattice that satisfies $x(\altnot x)y \leq \ut \implies y \leq \ut$ and consider $a\in A$. Then $a(\altnot a)(\altnot a\ld\altnot a) \leq  \ut$ yields $\altnot a\ld \altnot a\leq \ut$. But also $\altnot a(a\rd a) a \leq \ut$,  yielding $a\rd a \leq \altnot a\ld\altnot a \leq \ut$. That is,  $\m A$ satisfies $x\rd x \eq \ut$ and is  integrally closed. The proof for (iv) is symmetrical. \end{proof}

For any $\ut$\hyp{}cyclic residuated lattice $\m{A}$, the map
\[
\alpha \colon A \to A; \quad a \mapsto \daltnot a
\]
is a nucleus on the induced partially ordered monoid $\pair{A, \leq, \pd, \ut}$, i.e., an increasing, order\hyp{}preserving,  idempotent map satisfying $\alpha(a)\alpha(b) \leq \alpha(ab)$ for all $a, b \in A$ (see, e.g.,~\cite[Lem.~3.35]{GJKO07}). Moreover, the image of $A$ under $\alpha$ can be equipped with the structure of a residuated lattice
\[
\m{A}_{\daltnot}  = \pair{\alpha[A],  \mt, \jn_{\daltnot}, \pd_{\daltnot}, \ld, \rd, \ut},
\]
where $a \jn_{\daltnot} b := \alpha(a \jn b)$ and $a \pd_{\daltnot} b := \alpha(a \pd b)$  (see, e.g., \cite[Thm.~3.34(4)]{GJKO07}).

Suppose now that $\m{A}$ is an integrally closed residuated lattice satisfying $\daltnot x \eq x$. Then for any $a \in A$,
\[
a(\altnot a)= \daltnot (a(\altnot a)) = \altnot (a \ld (\daltnot a)) = \altnot (a \ld a) = \altnot \ut = \ut.
\]
That is, $\m{A}$ satisfies $x(x \ld \ut) \eq \ut$ and is therefore an $\ell$-group (see~\cite[Sec.~2]{JT02} for the translations to the standard signature). In this case,  $\altnot$ is the group inverse operation and $\alpha$ is the identity map, so $\m{A}  = \m{A}_{\daltnot}$. On the other hand, if $\m{A}$ is an integral residuated lattice, then $\altnot a = \ut$ for all $a \in A$ and $\alpha$ maps every element to  $\ut$, so $\m{A}_{\daltnot}$ is trivial. More generally, if $\m{A}$ is integrally closed, then $\alpha$ and its image enjoy the following properties.

\begin{proposition}[]\label{prop:alpha}
Let $\m{A}$ be an integrally closed residuated lattice. 
\begin{enumerate}[font=\upshape, label={(\alph*)}]
\item	The map $\alpha \colon \m A \to \m A_{\daltnot}$ is a surjective homomorphism. 
\item	 $\m{A}_{\daltnot}$ is an $\ell$-group.
\end{enumerate}
\end{proposition}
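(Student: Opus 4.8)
The plan is to verify that $\alpha$ preserves each operation, treating the lattice meet as the one genuinely hard case, and to deduce (b) from the special case recorded just before the statement. I would first dispose of the routine parts of (a). Surjectivity is immediate: $\alpha$ is idempotent, so it fixes every element of its image $\alpha[A]$ and hence maps onto $\m{A}_{\daltnot}$. Preservation of $\ut$ is clear since $\daltnot\ut = \ut$, and preservation of $\jn$ and $\pd$ is exactly the statement that $\alpha$ is the canonical surjection attached to the nucleus $\alpha$: for any nucleus one has $\daltnot(a \jn b) = \daltnot(\daltnot a \jn \daltnot b)$ and $\daltnot(ab) = \daltnot(\daltnot a \pd \daltnot b)$, which say precisely that $\alpha(a \jn b) = \alpha(a) \jn_{\daltnot} \alpha(b)$ and $\alpha(ab) = \alpha(a)\pd_{\daltnot}\alpha(b)$.

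Next I would use Lemma~\ref{lem:alpha-is-hom} to handle the residuals. Since $\daltnot = \altnot\altnot$ and the residuals of $\m{A}_{\daltnot}$ are those of $\m A$ restricted to $\alpha[A]$, I compute, for $a,b\in A$, that $\alpha(a\ld b) = \altnot\bigl(\altnot(a\ld b)\bigr) = \altnot(\altnot b \rd \altnot a) = \altnot\altnot a \ld \altnot\altnot b = \alpha(a)\ld\alpha(b)$, where the second equality is Lemma~\ref{lem:alpha-is-hom}(i) and the third is Lemma~\ref{lem:alpha-is-hom}(ii). The computation for $\rd$ is symmetric. Thus $\alpha$ preserves every operation except possibly $\mt$.

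Before attacking the meet, I would establish (b), which needs only what has already been proved. The residuals and the constant $\ut$ of $\m{A}_{\daltnot}$ are inherited from $\m A$, so for $x \in \alpha[A]$ we have $x \ld x = \ut$ because $\m A$ is integrally closed; by Proposition~\ref{prop:ucyclicmon} this already makes $\m{A}_{\daltnot}$ integrally closed. Moreover every $x \in \alpha[A]$ is a fixed point of $\daltnot$, and since $\altnot$ is computed in $\m{A}_{\daltnot}$ as $x \mapsto x \ld \ut$, the operation $\daltnot$ of $\m{A}_{\daltnot}$ agrees with that of $\m A$; hence $\m{A}_{\daltnot}$ satisfies $\daltnot x \eq x$. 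The displayed computation preceding the Proposition shows that an integrally closed residuated lattice satisfying $\daltnot x \eq x$ satisfies $x(\altnot x) \eq \ut$ and is therefore an $\ell$-group, which is (b).

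It remains to show that $\alpha$ preserves $\mt$, and this is where I expect the real work to lie. Since $\alpha$ is monotone, $\alpha(a \mt b) \le \alpha(a) \mt \alpha(b)$ is automatic, so the content is the reverse inequality $\daltnot a \mt \daltnot b \le \daltnot(a \mt b)$. Using $\daltnot a \mt \daltnot b = \altnot(\altnot a \jn \altnot b)$ (from the residuation law $\altnot c \mt \altnot d = \altnot(c \jn d)$) and rearranging by residuation, this reduces to the single inequality $\altnot(a \mt b)\,(\daltnot a \mt \daltnot b) \le \ut$, which is the De~Morgan-type law $\altnot(a \mt b) \eq \altnot a \jn \altnot b$ in disguise. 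I would prove this identity directly in $\m A$ from the defining quasiequations, most conveniently in the packaged forms of Lemma~\ref{lem:alpha-is-hom}(iii),(iv). I expect this to be the main obstacle: unlike the residuals, the meet of $\m A$ is not term-definable from the monoid and residual operations, so its preservation cannot be reduced to that of the other operations nor read off from the $\ell$-group structure of $\m{A}_{\daltnot}$ (which only sees the closed elements), and must genuinely exploit integral closedness. Once the De~Morgan law is in hand, $\alpha(a\mt b) = \altnot(\altnot a \jn \altnot b) = \daltnot a \mt \daltnot b = \alpha(a)\mt\alpha(b)$, completing (a).
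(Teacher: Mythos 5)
Everything up to the meet is fine and matches the paper's own proof: surjectivity from idempotency, preservation of $\ut$, $\jn$, and $\pd$ via the nucleus, preservation of the residuals via Lemma~\ref{lem:alpha-is-hom}(i),(ii) (the computation $\daltnot(a\ld b)=\altnot(\altnot b\rd\altnot a)=\daltnot a\ld\daltnot b$ is exactly what the paper intends), and part~(b) from $\daltnot x\eq x$ on the image together with the displayed computation preceding the statement. You also correctly isolate the crux of~(a): the inequality $(\daltnot a\mt\daltnot b)\,\altnot(a\mt b)\le\ut$, equivalently $\daltnot a\mt\daltnot b\le\daltnot(a\mt b)$. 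But you do not prove it --- you only announce that you ``would prove'' an identity from which it follows and flag it as the expected main obstacle. Since that step carries essentially all the content of the proposition, the proof is incomplete as it stands.

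Moreover, the route you sketch for it is a dead end. The inequality you need is \emph{not} the De~Morgan law $\altnot(x\mt y)\eq\altnot x\jn\altnot y$ in disguise; it is strictly weaker. Granting the proposition, one computes $\altnot(a\mt b)=\altnot\daltnot(a\mt b)=\altnot(\daltnot a\mt\daltnot b)=\daltnot(\altnot a\jn\altnot b)$, so the nontrivial half of De~Morgan is equivalent to the join $\altnot a\jn\altnot b$ of two $\daltnot$-closed elements being itself $\daltnot$-closed --- precisely the property a nucleus does not guarantee (which is why $\jn_{\daltnot}$ must be defined as $\alpha(\cdot\jn\cdot)$). And it does fail in $\ICRL$: a cut-free proof search in the calculus $\pc{IcRL}$ of Section~\ref{Sec:proof-theory-and-decidability} for the sequent $(x\mt y)\ld\ut\seq(x\ld\ut)\jn(y\ld\ut)$ terminates without success, so by completeness the law is not valid (it holds in $\ell$-groups and, trivially, in integral residuated lattices, which is why the easy test cases are misleading). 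The paper instead proves the weaker inequality directly: from $(\altnot a)(\daltnot a)\le\ut$ and $b(\altnot b)\le\ut$ it derives $a(\altnot a)b(\altnot b)(\daltnot a\mt\daltnot b)\le a\mt b\le\daltnot(a\mt b)$, residuates to get $a(\altnot a)b(\altnot b)(\daltnot a\mt\daltnot b)\altnot(a\mt b)\le\ut$, and then applies Lemma~\ref{lem:alpha-is-hom}(iii) twice to strip off the prefixes $a(\altnot a)$ and $b(\altnot b)$, leaving $(\daltnot a\mt\daltnot b)\altnot(a\mt b)\le\ut$. You need an argument of this kind, aimed at the weak inequality itself rather than at the (invalid) De~Morgan identity.
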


\begin{proof}
(a)  Any nucleus on the induced partially ordered monoid of a residuated lattice preserves the monoidal structure and joins (see, e.g., \cite[Thm.~3.34(2)]{GJKO07}). By parts (i) and~(ii) of Lemma~\ref{lem:alpha-is-hom}, this nucleus also preserves the residual operations. It therefore suffices to show that $\alpha$ preserves binary meets. First note that since $(\altnot a)(\daltnot a) \leq \ut$, also $a(\altnot a)(\daltnot a) \leq a$, and, since $b(\altnot b) \leq \ut$, it follows that $a(\altnot a)b(\altnot b) (\daltnot a) \leq a$. Similarly,  $a(\altnot a)b(\altnot b)(\daltnot b) \leq b$, and hence
\[
a(\altnot a) b (\altnot b)(\daltnot a \mt \daltnot b) \leq a \mt b \leq \daltnot (a\mt b). 
\] 
By residuation, $a(\altnot a) b (\altnot b) (\daltnot a \mt \daltnot b)(\altnot (a\mt b)) \leq \ut$, and hence, applying part (iii) of Lemma~\ref{lem:alpha-is-hom} twice,  $(\daltnot a \mt \daltnot b)(\altnot (a\mt b)) \leq \ut$. By residuation again,
\[
\daltnot a \mt \daltnot b \leq \daltnot (a\mt b). 
\]
Since $\alpha$ is order\hyp{}preserving, $\daltnot (a \mt b) = \daltnot a \mt \daltnot b$ as required.

(b) That $\m{A}_{\daltnot}$ is an integrally closed residuated lattice follows immediately from part (a). But also for any $a \in A$, we have $\daltnot\alpha(a) = \alpha(\alpha(a)) = \alpha(a)$, so $\m{A}_{\daltnot}$ satisfies  $\daltnot x \eq x$ and is an $\ell$-group.
\end{proof}

\begin{proposition}\label{prop:torsion-free}
Every integrally closed residuated lattice is torsion\hyp{}free, i.e., satisfies the quasiequation\, $x^n \eq \ut \implies x \eq \ut$\,  for each $n \in \mathbb{N}{\setminus}\{0\}$.
\end{proposition}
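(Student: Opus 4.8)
The plan is to push the problem into the $\ell$-group $\m{A}_{\daltnot}$ by means of the homomorphism $\alpha$ of Proposition~\ref{prop:alpha} and then appeal to the classical fact that lattice\hyp{}ordered groups are torsion\hyp{}free. Fix $n \in \mathbb{N}{\setminus}\{0\}$ and suppose $a \in A$ satisfies $a^n \eq \ut$. First I would apply Proposition~\ref{prop:alpha}: since $\alpha$ is a residuated lattice homomorphism onto the $\ell$-group $\m{A}_{\daltnot}$, it preserves products and the unit, so $\alpha(a)^n = \alpha(a^n) = \alpha(\ut) = \ut$ holds in $\m{A}_{\daltnot}$.

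Next I would invoke that every $\ell$-group is torsion\hyp{}free. This is a standard consequence of the representation of $\ell$-groups as subdirect products of totally ordered groups, since in a totally ordered group $b \neq \ut$ forces $b^n \neq \ut$ (if $\ut < b$ then $\ut < b^n$, and dually). Applying this to $\m{A}_{\daltnot}$, the equation $\alpha(a)^n = \ut$ yields $\alpha(a) = \daltnot a = \ut$.

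It then remains to descend from $\daltnot a \eq \ut$ back to $a \eq \ut$. Because $\alpha$ is a nucleus, it is increasing, so $a \leq \daltnot a = \ut$. Repeatedly right\hyp{}multiplying the inequality $a \leq \ut$ by $a$ and using the monotonicity of $\pd$ gives $a^n \leq a^{n-1} \leq \cdots \leq a$; combined with the hypothesis $a^n \eq \ut$ this yields $\ut \leq a$. Hence $a \eq \ut$, establishing the quasiequation for the chosen $n$, and since $n$ was arbitrary the claim follows.

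The only nontrivial ingredient here is the torsion\hyp{}freeness of $\ell$-groups, which I would cite rather than reprove; every other step is a direct computation relying on the homomorphism property of $\alpha$ from Proposition~\ref{prop:alpha} and the fact that $a \leq \daltnot a$ in any $\ut$\hyp{}cyclic residuated lattice.
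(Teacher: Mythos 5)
Your proof is correct, and it shares the paper's overall strategy—push the problem through the homomorphism $\alpha$ of Proposition~\ref{prop:alpha} into the $\ell$-group $\m{A}_{\daltnot}$ and invoke torsion\hyp{}freeness of $\ell$-groups there—but your descent from $\alpha(a)=\ut$ back to $a=\ut$ is genuinely different and in fact simpler. The paper argues by induction on $n$: from $\altnot a = \ut$ it derives $\ut = (\altnot a)a^n \leq a^{n-1} \leq \altnot a = \ut$, concludes $a^{n-1}=\ut$, and then applies the induction hypothesis. You instead observe that the nucleus is increasing, so $a \leq \daltnot a = \ut$, whence monotonicity of the product gives $a^n \leq a$, and $a^n = \ut$ forces $\ut \leq a$, so $a = \ut$ in one stroke with no induction. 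Both arguments are sound; yours buys a shorter proof of the descent, while the paper's inductive version happens to work directly from $\altnot a = \ut$ without explicitly appealing to the increasing property of $\alpha$ (though that property is equally standard and is stated earlier in the same section). Your citation of torsion\hyp{}freeness of $\ell$-groups via subdirect decomposition into totally ordered groups matches the paper, which likewise cites this as a known fact.
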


\begin{proof}
Let $\m{A}$ be an integrally closed residuated lattice. We prove that $\m{A}$ satisfies\, $x^n \eq \ut \implies x \eq \ut$\, for each $n \in \mathbb{N}{\setminus}\{0\}$ by induction on $n$. The case $n = 1$ is trivial. For the inductive step, suppose that $n>1$ and $a^n = \ut$ for some $a \in A$. Then, since $\alpha \colon \m{A} \to \m{A}_{\daltnot}$ is a homomorphism,  $\alpha(a)^n = \alpha(a^n) = \alpha(\ut) = \ut$. But $\ell$-groups are torsion\hyp{}free, so $\alpha(a)=\ut$, yielding $\altnot a = \altnot\daltnot a = \altnot\alpha(a) = \altnot \ut = \ut$ and
\[
\ut = (\altnot a) a^n = (\altnot a) a a^{n-1} \leq a^{n-1} \leq  \altnot a = \ut.
\]
Hence $a^{n-1} = \ut$ and, by the induction hypothesis, $a = \ut$.     
\end{proof} 

We turn our attention now to varieties of integrally closed residuated lattices. Given any class $\vty{K} \subseteq \ICRL$, we denote by $\vty{K}_{\daltnot}$ the class $\{\m{A}_{\daltnot} \mid \m{A}\in\vty{K}\} \subseteq\LG$, recalling that $\LG$ denotes the variety of  $\ell$-groups. 

\begin{proposition}\label{prop:varieties:integrally-closed:lgroups}
Let $\vty{V}$ be any variety of integrally closed residuated lattices.
\begin{enumerate}[font=\upshape, label={(\alph*)}]
\item $\vty{V}_{\daltnot}$ is a variety of $\ell$-groups.
\item If $\vty{V}$ is defined relative to $\ICRL$ by a set of equations $E$, then $\vty{V}_{\daltnot}$ is defined relative to $\LG$ by $E$.
\end{enumerate}
Hence the map $\vty{V}\mapsto \vty{V}_{\daltnot}$ is an interior operator on the lattice of subvarieties of $\ICRL$ whose image is the lattice of subvarieties of $\LG$.
\end{proposition}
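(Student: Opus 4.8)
The plan is to reduce everything to a single identity: for any variety $\vty{V}$ of integrally closed residuated lattices,
\[
\vty{V}_{\daltnot} = \vty{V} \cap \LG.
\]
I would prove this by two inclusions. For $\subseteq$, take any $\m{A} \in \vty{V}$; by Proposition~\ref{prop:alpha}(a) the map $\alpha$ exhibits $\m{A}_{\daltnot}$ as a homomorphic image of $\m{A}$, so $\m{A}_{\daltnot} \in \vty{V}$ because varieties are closed under homomorphic images, and by Proposition~\ref{prop:alpha}(b) it is an $\ell$-group, whence $\m{A}_{\daltnot} \in \vty{V} \cap \LG$. For $\supseteq$, take any $\m{B} \in \vty{V} \cap \LG$; since $\m{B}$ is an $\ell$-group it satisfies $\daltnot x \eq x$ (here $\altnot$ is the group inverse), so $\alpha$ is the identity map on $\m{B}$ and $\m{B}_{\daltnot} = \m{B}$, giving $\m{B} = \m{B}_{\daltnot} \in \vty{V}_{\daltnot}$.

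With this identity in hand, part (a) is immediate: $\vty{V} \cap \LG$ is an intersection of two subvarieties of $\RL$ and hence itself a variety, and being contained in $\LG$ it is a variety of $\ell$-groups. For part (b), I would use that $\LG \subseteq \ICRL$: if $\vty{V} = \{\m{A} \in \ICRL \mid \m{A} \models E\}$, then
\[
\vty{V}_{\daltnot} = \vty{V} \cap \LG = \{\m{G} \in \LG \mid \m{G} \models E\},
\]
since the side condition $\m{G} \in \ICRL$ is automatic for $\m{G} \in \LG$. Thus $\vty{V}_{\daltnot}$ is exactly the subvariety of $\LG$ axiomatized by $E$ relative to $\LG$.

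For the final assertion, I would verify the three interior-operator axioms directly from the presentation $\vty{V} \mapsto \vty{V} \cap \LG$, all routine: monotonicity ($\vty{V} \subseteq \vty{W}$ gives $\vty{V} \cap \LG \subseteq \vty{W} \cap \LG$), contractivity ($\vty{V} \cap \LG \subseteq \vty{V}$), and idempotence ($(\vty{V} \cap \LG) \cap \LG = \vty{V} \cap \LG$); note also that $\vty{V} \cap \LG \subseteq \LG \subseteq \ICRL$, so the operator does map subvarieties of $\ICRL$ to subvarieties of $\ICRL$. To identify the image, observe that every value $\vty{V} \cap \LG$ is a subvariety of $\LG$, while conversely every subvariety $\vty{W}$ of $\LG$ occurs as a (fixed) value: $\vty{W} \subseteq \LG \subseteq \ICRL$ makes $\vty{W}$ a subvariety of $\ICRL$ with $\vty{W}_{\daltnot} = \vty{W} \cap \LG = \vty{W}$.

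The only real content is the identity $\vty{V}_{\daltnot} = \vty{V} \cap \LG$, and within it the reverse inclusion, which hinges on the already-established fact that $\alpha$ collapses to the identity precisely on $\ell$-groups; once this is in place, parts (a), (b), and the interior-operator statement are all straightforward lattice-theoretic bookkeeping.
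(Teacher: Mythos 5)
Your proof is correct and follows essentially the same route as the paper: both inclusions rest on Proposition~\ref{prop:alpha} (so that $\m{A}_{\daltnot}$ is an $\ell$-group and a homomorphic image of $\m{A}$) together with the fact that $\daltnot x \eq x$ holds in $\ell$-groups, so that $\alpha$ fixes every member of $\vty{V}\cap\LG$. Packaging the argument as the single identity $\vty{V}_{\daltnot}=\vty{V}\cap\LG$ is only a cosmetic reorganization of the paper's proof, which works with the variety $\vty{W}$ axiomatized by $E$ relative to $\LG$ (the same class, since $\LG\subseteq\ICRL$).
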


\begin{proof}
Let $\vty{V}$ be a variety of integrally closed residuated lattices defined relative to $\ICRL$ by a set of equations $E$ (e.g., the equational theory of $\vty{V}$), and let $\vty{W}$ be the variety of $\ell$-groups defined relative to $\LG$ by $E$. Clearly $\vty{W} = \vty W_{\daltnot} \subseteq \vty{V}_{\daltnot}$. But also each $\m{A}_{\daltnot} \in \vty{V}_{\daltnot}$ is, by Proposition~\ref{prop:alpha}, an $\ell$-group and a homomorphic image of $\m{A}\in\vty{V}$.  So  $\vty{V}_{\daltnot} \subseteq \vty{W}$. The last statement then follows from the observation that $\vty{V}_{\daltnot} = \vty{V}$ if and only if $\vty{V}\subseteq\LG$. 
\end{proof}

We now establish a correspondence between the validity of equations in an integrally closed residuated lattice $\m{A}$ and validity of equations in the $\ell$-group $\m{A}_{\daltnot}$, denoting the term algebra for residuated lattices over a fixed countably infinite set of variables by $\m{Tm}$.

\begin{lemma}\label{lem:A_daltnot}
For any integrally closed residuated lattice $\m{A}$ and $s,t \in \text{Tm}$,
\[
\m{A}_{\daltnot} \models s \leq t \iff \m{A} \models  \daltnot s \leq \daltnot t.  
\] 
\end{lemma}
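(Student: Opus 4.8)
The plan is to read the biconditional off directly from the fact, established in Proposition~\ref{prop:alpha}(a), that $\alpha \colon \m A \to \m A_{\daltnot}$ is a surjective homomorphism. Two preliminary observations set this up. First, the order of $\m A_{\daltnot}$ is simply the restriction of the order of $\m A$: the two structures share the meet operation $\mt$ (and $\alpha[A]$ is closed under it), so for $a,b \in \alpha[A]$ we have $a \le_{\m{A}_{\daltnot}} b$ iff $a \mt b = a$ iff $a \le b$ in $\m A$. Second, for any valuation (i.e.\ homomorphism) $v \colon \m{Tm} \to \m A$ and any term $r$, the defining identities of $\altnot$ and $\alpha$ give
\[
v(\daltnot r) = \daltnot\, v(r) = \alpha(v(r)),
\]
so that $\m A \models \daltnot s \le \daltnot t$ is precisely the assertion that $\alpha(v(s)) \le \alpha(v(t))$ for every valuation $v \colon \m{Tm} \to \m A$.

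The core step is to recognise the valuations into $\m A_{\daltnot}$ as exactly the composites $\alpha \circ v$. On the one hand, for any $v \colon \m{Tm} \to \m A$ the map $\alpha \circ v \colon \m{Tm} \to \m A_{\daltnot}$ is a homomorphism, being a composite of homomorphisms, and it satisfies $(\alpha \circ v)(r) = \alpha(v(r))$ for every term $r$. On the other hand, given any valuation $w \colon \m{Tm} \to \m A_{\daltnot}$, let $v \colon \m{Tm} \to \m A$ be the unique homomorphism agreeing with $w$ on variables (regarding each $w(x) \in \alpha[A]$ as an element of $A$); since $\alpha$ is idempotent and hence fixes $\alpha[A]$ pointwise, the homomorphisms $\alpha \circ v$ and $w$ agree on variables and are therefore equal. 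Thus $\{\alpha \circ v \mid v \colon \m{Tm} \to \m A\}$ is exactly the class of valuations into $\m A_{\daltnot}$.

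Putting these together, $\m A_{\daltnot} \models s \le t$ holds iff $w(s) \le w(t)$ for every valuation $w \colon \m{Tm} \to \m A_{\daltnot}$, which by the core step and the first observation is equivalent to $\alpha(v(s)) \le \alpha(v(t))$ holding for every valuation $v \colon \m{Tm} \to \m A$; by the second observation this is exactly $\m A \models \daltnot s \le \daltnot t$, and the biconditional follows. I expect the only point needing care to be the bookkeeping in the core step --- verifying that every valuation into $\m A_{\daltnot}$ genuinely factors as $\alpha \circ v$ --- which rests on the idempotence of $\alpha$ together with the description of $\m A_{\daltnot}$ as having universe $\alpha[A]$, rather than on any further computation; everything else is immediate from the homomorphism property of $\alpha$ and the shared meet.
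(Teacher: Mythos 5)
Your proof is correct and rests on the same foundation as the paper's, namely Proposition~\ref{prop:alpha}(a) together with the identity $(\alpha\circ v)(r)=v(\daltnot r)$ and the fact that the order on $\m A_{\daltnot}$ is inherited from $\m A$. The only cosmetic difference is that the paper handles the direction from $\m A\models\daltnot s\leq\daltnot t$ to $\m A_{\daltnot}\models s\leq t$ by noting that homomorphic images preserve inequalities and that $\m A_{\daltnot}$ satisfies $\daltnot x\eq x$, whereas you instead show every valuation into $\m A_{\daltnot}$ factors as $\alpha\circ v$ using the idempotence of $\alpha$; both are immediate.
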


\begin{proof}
Suppose first that $\m{A} \models  \daltnot s \leq \daltnot t$. Since $\m{A}_{\daltnot}$ is a homomorphic image of $\m{A}$, also $\m{A}_{\daltnot} \models  \daltnot s \leq \daltnot t$. But $\m{A}_{\daltnot}$ is an $\ell$-group, so $\m{A}_{\daltnot} \models  s \leq t$. 

Now suppose that $\m{A} \not\models  \daltnot s \leq \daltnot t$. Then there exists a homomorphism $\nu\colon\m{Tm}\to\m{A}$  such that $\nu(\daltnot s) \not\leq \nu(\daltnot t)$. Since $\alpha$ is a homomorphism from $\m{A}$ to $\m{A}_{\daltnot}$, we obtain a homomorphism $\alpha \circ \nu\colon\m{Tm}\to\m{A}_{\daltnot}$ such that
\[
\alpha \circ \nu(s) = \alpha(\nu(s)) = \daltnot \nu(s) = \nu(\daltnot s) \not\leq \nu(\daltnot t) = \alpha(\nu(t)) =\alpha \circ \nu(t).
\] 
Hence $\m{A}_{\daltnot} \not \models s \leq t$ as required. 
\end{proof}    

Following~\cite{GO06}, we will say that a variety $\vty{V}$ of residuated lattices  admits the \emph{(equational) Glivenko property} with respect to another variety  $\vty{W}$ of residuated lattices if for all $s,t \in\text{Tm}$,
\[
\vty{V} \models \ut \rd  (s \ld \ut) \leq  \ut \rd  (t\ld \ut)  \iff \vty{W} \models s \leq t \iff \vty{V} \models (\ut \rd s) \ld \ut \leq (\ut \rd t) \ld \ut,
\]
noting that if $\vty{V}$ is a variety of $\ut$\hyp{}cyclic residuated lattices, this simplifies to
\[
\vty{W} \models s \leq t \iff \vty{V} \models \daltnot s \leq \daltnot t.
\]
Let us also note the following useful consequence of this property.

\begin{proposition}\label{prop:Glivenko-for-inequations-s-leq-ut}
If $\vty V$ is a variety of residuated lattices admitting the Glivenko property with respect to a variety of residuated lattices $\vty W$, then for all $s\in \text{Tm}$,
\[
\vty W \models s\leq \ut \iff \vty V \models s\leq \ut.
\]
\end{proposition}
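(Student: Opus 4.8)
The plan is to specialize the defining biconditionals of the Glivenko property to the case $t = \ut$ and then to strip off the outer double negation by a routine residuation argument that is valid in every residuated lattice.

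First I would record two trivial identities holding in any residuated lattice, namely $\ut \ld \ut = \ut$ and $\ut \rd \ut = \ut$: indeed $x \leq \ut \ld \ut \iff \ut x \leq \ut \iff x \leq \ut$, so $\ut \ld \ut$ and $\ut$ have the same principal downset and must coincide, and dually for $\ut \rd \ut$. Substituting $t = \ut$ into the first equivalence in the definition of the Glivenko property and simplifying $\ut \rd (\ut \ld \ut) = \ut \rd \ut = \ut$ then yields
\[
\vty{V} \models \ut \rd (s \ld \ut) \leq \ut \iff \vty{W} \models s \leq \ut.
\]

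Second, I would establish the purely residuation-theoretic fact that for every residuated lattice $\m{A}$ and every $a \in A$ one has $a \leq \ut$ if and only if $\ut \rd (a \ld \ut) \leq \ut$. For the right-to-left direction, residuation gives $a(a \ld \ut) \leq \ut$, whence $a \leq \ut \rd (a \ld \ut)$, so $\ut \rd (a \ld \ut) \leq \ut$ forces $a \leq \ut$. For the converse, if $a \leq \ut$ then, since $\ld$ is order-reversing in its first argument, $a \ld \ut \geq \ut \ld \ut = \ut$; and since $\ut \rd (-)$ is order-reversing in its argument, $\ut \rd (a \ld \ut) \leq \ut \rd \ut = \ut$. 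As this equivalence holds in every residuated lattice, it holds throughout $\vty{V}$, giving $\vty{V} \models s \leq \ut \iff \vty{V} \models \ut \rd (s \ld \ut) \leq \ut$.

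Chaining the two displayed equivalences yields $\vty{V} \models s \leq \ut \iff \vty{W} \models s \leq \ut$, as required; alternatively one could run the same argument through the symmetric expression $(\ut \rd s)\ld \ut$ via the third clause of the Glivenko property. I do not anticipate a genuine obstacle here, since the entire content is bookkeeping of residuation; the only point deserving a moment's care is confirming that the outer negation wrapper $\ut \rd ({-} \ld \ut)$ collapses to $\,\leq \ut\,$ precisely on elements below $\ut$, which the second step makes explicit.
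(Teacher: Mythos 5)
Your proposal is correct and follows essentially the same route as the paper: specialize the Glivenko biconditional to $t = \ut$, simplify $\ut\rd(\ut\ld\ut)$ to $\ut$, and then use the universally valid facts $x \leq \ut\rd(x\ld\ut)$ and $x \leq \ut \implies \ut\rd(x\ld\ut)\leq \ut$ (which is exactly the content of your two-directional residuation step) to collapse $\vty V \models \ut\rd(s\ld\ut)\leq\ut$ to $\vty V \models s\leq\ut$. The only difference is that you spell out the residuation bookkeeping in more detail than the paper does.
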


\begin{proof}
The equation $x \leq \ut\rd(x\ld \ut)$ and quasiequation $x\leq \ut \implies \ut\rd(x\ld \ut)\leq \ut$ are valid in all residuated lattices. Hence for all $s\in \text{Tm}$, 
\begin{align*}
\vty W \models s\leq \ut &\iff \vty V\models \ut\rd(s\ld\ut) \leq \ut\rd(\ut\ld\ut) \\
 &\iff \vty V\models \ut\rd(s\ld\ut) \leq \ut\\
  &\iff \vty V\models s \leq \ut. \tag*{\qedhere}
\end{align*}
\end{proof}

For integrally closed residuated lattices, we obtain the following pivotal result.

\begin{theorem}\label{thm:Glivenko:varieties:integrally-closed}
Any variety $\vty{V}$ of integrally closed residuated lattices admits the Glivenko property with respect to $\vty{V}_{\daltnot}$.
\end{theorem}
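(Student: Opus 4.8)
The plan is to reduce the double equivalence in the definition of the Glivenko property to a single equivalence using $\ut$-cyclicity, and then to derive that equivalence by applying Lemma~\ref{lem:A_daltnot} uniformly across the members of $\vty{V}$.

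First I would note that, since $\vty{V} \subseteq \ICRL$, every member of $\vty{V}$ is integrally closed and hence $\ut$-cyclic by Proposition~\ref{prop:ucyclicmon}. As observed immediately after the definition of the Glivenko property, in any $\ut$-cyclic residuated lattice the terms $\ut \rd (s \ld \ut)$ and $(\ut \rd s) \ld \ut$ both reduce to $\daltnot s$, so the two outer conditions of the defining double equivalence coincide. It therefore suffices to establish
\[
\vty{V}_{\daltnot} \models s \leq t \iff \vty{V} \models \daltnot s \leq \daltnot t
\]
for all $s,t \in \text{Tm}$; here I would also record that $\vty{V}_{\daltnot}$ is a variety of $\ell$-groups, and hence a variety of residuated lattices, by Proposition~\ref{prop:varieties:integrally-closed:lgroups}(a), so that the phrase ``with respect to $\vty{V}_{\daltnot}$'' is meaningful.

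Next I would obtain the displayed equivalence by quantifying Lemma~\ref{lem:A_daltnot} over $\vty{V}$. Unfolding the definition of the class $\vty{V}_{\daltnot} = \{\m{A}_{\daltnot} \mid \m{A} \in \vty{V}\}$, the statement $\vty{V}_{\daltnot} \models s \leq t$ amounts to $\m{A}_{\daltnot} \models s \leq t$ for every $\m{A} \in \vty{V}$, while $\vty{V} \models \daltnot s \leq \daltnot t$ amounts to $\m{A} \models \daltnot s \leq \daltnot t$ for every $\m{A} \in \vty{V}$. Since each such $\m{A}$ is integrally closed, Lemma~\ref{lem:A_daltnot} supplies the equivalence $\m{A}_{\daltnot} \models s \leq t \iff \m{A} \models \daltnot s \leq \daltnot t$ for each fixed $\m{A}$, and conjoining these equivalences over all $\m{A} \in \vty{V}$ yields exactly the displayed biconditional.

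I do not expect a genuine obstacle here: the substantive content, namely the algebra-by-algebra correspondence between $s \leq t$ in the $\ell$-group $\m{A}_{\daltnot}$ and $\daltnot s \leq \daltnot t$ in $\m{A}$, has already been established in Lemma~\ref{lem:A_daltnot}, and this theorem only lifts that correspondence from individual algebras to the whole variety. The only steps demanding care are verifying that the $\ut$-cyclic simplification of the Glivenko definition is applicable (which it is, by Proposition~\ref{prop:ucyclicmon}) and correctly matching the universal quantifier over $\vty{V}_{\daltnot}$ with the universal quantifier over $\vty{V}$ through the definition of $\vty{V}_{\daltnot}$.
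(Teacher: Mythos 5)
Your proposal is correct and takes essentially the same approach as the paper: reduce the Glivenko property to the single equivalence $\vty{V}_{\daltnot} \models s \leq t \iff \vty{V} \models \daltnot s \leq \daltnot t$ via $\ut$-cyclicity, then apply Lemma~\ref{lem:A_daltnot} algebra by algebra over $\vty{V}$. The only cosmetic difference is in the right-to-left implication, which the paper discharges via $\vty{V}_{\daltnot} \subseteq \vty{V}$ together with $\vty{V}_{\daltnot} \models \daltnot x \eq x$, while you use the biconditional of Lemma~\ref{lem:A_daltnot} pointwise for both directions; both are valid.
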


\begin{proof}
Suppose that $\vty V_{\daltnot} \models s\leq t$. For any $\m A\in \vty V$, it follows that $\m A_{\daltnot} \models s\leq t$, and hence $\m A\models \daltnot s\leq \daltnot t$, by Lemma~\ref{lem:A_daltnot}. So $\vty{V} \models  \daltnot s\leq \daltnot t$. The other implication follows from the fact that $\vty V_{\daltnot}\subseteq \vty V$ and $\vty V_{\daltnot} \models \daltnot x \eq x$.
\end{proof}

\begin{corollary}\label{cor:Glivenko:MTRL:vs:LG}
The variety of integrally closed residuated lattices admits the Glivenko property with respect to the variety of $\ell$-groups, and hence for all $s\in\text{Tm}$,
\[
\LG \models s \leq \ut \iff \ICRL \models s \leq \ut.
\] 
\end{corollary}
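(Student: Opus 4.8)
The plan is to specialize Theorem~\ref{thm:Glivenko:varieties:integrally-closed} to the case $\vty{V} = \ICRL$ and then identify the resulting target variety $\ICRL_{\daltnot}$ with $\LG$. Since $\ICRL$ is itself (the largest) variety of integrally closed residuated lattices, Theorem~\ref{thm:Glivenko:varieties:integrally-closed} applies directly and yields that $\ICRL$ admits the Glivenko property with respect to $\ICRL_{\daltnot}$. So the only thing left to check is that $\ICRL_{\daltnot}$ is exactly $\LG$.

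To establish $\ICRL_{\daltnot} = \LG$, I would appeal to Proposition~\ref{prop:varieties:integrally-closed:lgroups}(b) with the empty set of equations $E = \emptyset$. Indeed, $\ICRL$ is defined relative to itself by $E = \emptyset$, so that proposition tells us $\ICRL_{\daltnot}$ is the subvariety of $\LG$ defined relative to $\LG$ by $E = \emptyset$, namely $\LG$ itself. This is precisely the content of the final sentence of Proposition~\ref{prop:varieties:integrally-closed:lgroups}, which records that the image of the interior operator $\vty{V}\mapsto\vty{V}_{\daltnot}$ is the whole lattice of subvarieties of $\LG$; in particular $\LG$ lies in this image. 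For the displayed biconditional I would then invoke Proposition~\ref{prop:Glivenko-for-inequations-s-leq-ut} with $\vty{V} = \ICRL$ and $\vty{W} = \LG$: having just shown that $\ICRL$ admits the Glivenko property with respect to $\LG$, that proposition gives $\LG \models s\leq\ut \iff \ICRL \models s\leq\ut$ for all $s\in\text{Tm}$ at once.

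Since all the ingredients are already in place, there is no genuine obstacle; the corollary is essentially the observation that the general Glivenko result of Theorem~\ref{thm:Glivenko:varieties:integrally-closed} applies to the top variety $\ICRL$. The one point deserving care is the identification $\ICRL_{\daltnot} = \LG$, which is not a triviality but rests on the structural work of Proposition~\ref{prop:alpha}: the inclusion $\ICRL_{\daltnot} \subseteq \LG$ comes from each $\m{A}_{\daltnot}$ being an $\ell$-group, while the reverse inclusion $\LG \subseteq \ICRL_{\daltnot}$ comes from every $\ell$-group $\m{A}$ arising as its own image $\m{A}_{\daltnot}$ (with $\alpha$ the identity, as noted before Proposition~\ref{prop:alpha}).
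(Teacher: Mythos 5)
Your proposal is correct and follows exactly the route the paper intends: the corollary is stated without proof as an immediate consequence of Theorem~\ref{thm:Glivenko:varieties:integrally-closed} applied to $\vty{V}=\ICRL$, together with the identification $\ICRL_{\daltnot}=\LG$ from Proposition~\ref{prop:varieties:integrally-closed:lgroups} (with $E=\emptyset$) and the displayed biconditional from Proposition~\ref{prop:Glivenko-for-inequations-s-leq-ut}. Your added care over the identification $\ICRL_{\daltnot}=\LG$ via Proposition~\ref{prop:alpha} is exactly the right justification.
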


Theorem~\ref{thm:Glivenko:varieties:integrally-closed} shows that, in some sense, $\ICRL$ plays the same role for $\LG$ that the variety of Heyting algebras plays for the variety of Boolean algebras (see, e.g., \cite[Thm.~IX.5.3]{BD74}). Moreover, as the next result demonstrates, $\ICRL$ is the largest variety of residuated lattices that can play this role. 

\begin{theorem}
Let $\vty{V}$ be a variety of integrally closed residuated lattices that is axiomatized relative to $\ICRL$ by equations of the form $s \le \ut$. Then $\vty{V}$ is the largest variety of residuated lattices admitting the Glivenko property with respect to  $\vty{V}_{\daltnot}$. 
\end{theorem}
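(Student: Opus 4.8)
The plan is to verify the two halves of ``largest'' separately. That $\vty{V}$ itself admits the Glivenko property with respect to $\vty{V}_{\daltnot}$ is immediate from Theorem~\ref{thm:Glivenko:varieties:integrally-closed}, so the work lies entirely in establishing maximality: if $\vty{U}$ is \emph{any} variety of residuated lattices admitting the Glivenko property with respect to $\vty{V}_{\daltnot}$, then $\vty{U}\subseteq\vty{V}$.

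The main tool will be Proposition~\ref{prop:Glivenko-for-inequations-s-leq-ut}, which applies to an arbitrary variety of residuated lattices and collapses the Glivenko property of $\vty{U}$ (relative to $\vty{V}_{\daltnot}$, a variety of $\ell$-groups by Proposition~\ref{prop:varieties:integrally-closed:lgroups}) to the single equivalence
\[
\vty{V}_{\daltnot}\models s\le\ut \iff \vty{U}\models s\le\ut \qquad\text{for all } s\in\text{Tm}.
\]
The strategy is then to observe that every axiom cutting $\vty{V}$ out of the variety of all residuated lattices has the form $s\le\ut$, that each such axiom holds in $\vty{V}_{\daltnot}$, and hence, by the displayed equivalence, that each holds in $\vty{U}$.

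The one point requiring care is that $\vty{V}$ is given axiomatized only relative to $\ICRL$, by some set $E=\{\,s_i\le\ut\,\}$; to conclude $\vty{U}\subseteq\vty{V}$ one must first force $\vty{U}\subseteq\ICRL$. Here I would use that the defining equations $x\ld x\eq\ut$ and $x\rd x\eq\ut$ of $\ICRL$ are equivalent, relative to $\RL$, to the inequations $x\ld x\le\ut$ and $x\rd x\le\ut$, since the reverse inequalities hold in every residuated lattice. As $\vty{V}_{\daltnot}$ consists of $\ell$-groups it satisfies both of these, so the displayed equivalence transfers them to $\vty{U}$, giving $\vty{U}\subseteq\ICRL$. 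Finally, since $\vty{V}_{\daltnot}\subseteq\vty{V}$ (each $\m{A}_{\daltnot}$ being a homomorphic image of a member of $\vty{V}$) and $\vty{V}\models s_i\le\ut$, we have $\vty{V}_{\daltnot}\models s_i\le\ut$ for each $i$, whence the same equivalence yields $\vty{U}\models s_i\le\ut$. Thus $\vty{U}$ satisfies both the $\ICRL$-axioms and $E$, so $\vty{U}\subseteq\vty{V}$. I expect no serious obstacle beyond correctly recognizing that the integral-closure axioms are themselves of the permitted shape $s\le\ut$, so that membership in $\ICRL$ can be imposed on $\vty{U}$ purely through the $\ell$-group structure of $\vty{V}_{\daltnot}$; this is what makes the hypothesis ``axiomatized by equations of the form $s\le\ut$'' exactly the right one.
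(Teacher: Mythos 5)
Your proposal is correct and follows essentially the same route as the paper: Theorem~\ref{thm:Glivenko:varieties:integrally-closed} for the first half, and Proposition~\ref{prop:Glivenko-for-inequations-s-leq-ut} to transfer the axioms of $\ICRL$ (in the form $x\ld x\le \ut$, $x\rd x\le\ut$) and the equations in $E$ from $\vty{V}_{\daltnot}$ to the competing variety. The only cosmetic difference is that you justify $\vty{V}_{\daltnot}\models E$ via $\vty{V}_{\daltnot}\subseteq\vty{V}$, whereas the paper invokes Proposition~\ref{prop:varieties:integrally-closed:lgroups} to note that $\vty{V}_{\daltnot}$ is axiomatized relative to $\LG$ by $E$; both are valid.
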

\begin{proof}
By Theorem~\ref{thm:Glivenko:varieties:integrally-closed}, $\vty{V}$ admits the Glivenko property with respect to $\vty{V}_{\daltnot}$. Now suppose that $\vty{W}$ is any variety of residuated lattices admitting the Glivenko property with respect to $\vty{V}_{\daltnot}$. By assumption, $\vty{V}$ is axiomatized relative  to $\ICRL$ by a set of equations $E$ of the form $s \le \ut$, so, by Proposition~\ref{prop:varieties:integrally-closed:lgroups}, the variety $\vty{V}_{\daltnot}$ is axiomatized relative to $\LG$ by $E$. But also by Proposition~\ref{prop:Glivenko-for-inequations-s-leq-ut}, all members of the variety $\vty{W}$ must satisfy all the equations in $E$ as well as $x \ld x \le \ut$. So $\vty{W} \subseteq \vty{V}$.  
\end{proof}

\begin{corollary}\label{cor:largestGlivenkoLgroups}
The variety of integrally closed residuated lattices is the largest variety of residuated lattices that admits the Glivenko property with respect to the variety of $\ell$-groups.
\end{corollary}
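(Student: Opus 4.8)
The plan is to deduce this directly from the preceding theorem by instantiating its universally quantified variety $\vty{V}$ as $\ICRL$ itself. First I would verify the two hypotheses of that theorem for $\vty{V} = \ICRL$. That $\ICRL$ is a variety of integrally closed residuated lattices is immediate. That it is axiomatized relative to $\ICRL$ by equations of the form $s \le \ut$ holds trivially by taking the empty set of such equations; alternatively, one notes that every residuated lattice satisfies the lower bounds $\ut \le x \ld x$ and $\ut \le x \rd x$ (since $x \le x$ gives $\ut \le x \ld x$, and symmetrically), so that $\ICRL$ is already cut out of $\RL$ by the two inequations $x \ld x \le \ut$ and $x \rd x \le \ut$, both of the required shape.

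Next I would identify ${\ICRL}_{\daltnot}$ with $\LG$. Applying Proposition~\ref{prop:varieties:integrally-closed:lgroups}(b) with $E = \emptyset$ gives that ${\ICRL}_{\daltnot}$ is the subvariety of $\LG$ axiomatized relative to $\LG$ by the empty set, namely $\LG$ itself; equivalently, this is the assertion that the interior operator $\vty{V}\mapsto\vty{V}_{\daltnot}$ of Proposition~\ref{prop:varieties:integrally-closed:lgroups} sends the top variety $\ICRL$ to the top variety $\LG$ of its image. The surjectivity underlying this identification can also be seen concretely: as observed in the discussion preceding Proposition~\ref{prop:alpha}, every $\ell$-group $\m{G}$ is integrally closed and satisfies $\daltnot x \eq x$, so that $\alpha$ is the identity map and $\m{G}_{\daltnot} = \m{G}$; hence every member of $\LG$ lies in ${\ICRL}_{\daltnot}$.

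With these two observations in place, the preceding theorem applied to $\vty{V} = \ICRL$ states exactly that $\ICRL$ is the largest variety of residuated lattices admitting the Glivenko property with respect to ${\ICRL}_{\daltnot} = \LG$, which is the claim. Since the argument reduces entirely to the already-established theorem together with the routine computation of ${\ICRL}_{\daltnot}$, there is no substantial obstacle here; the only point demanding a moment's care is confirming that the defining equations of $\ICRL$ genuinely take the form $s \le \ut$ required by the theorem, which is guaranteed by the always-valid bounds $\ut \le x \ld x$ and $\ut \le x \rd x$.
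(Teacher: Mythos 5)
Your proposal is correct and follows exactly the route the paper intends: the corollary is the instance $\vty{V}=\ICRL$ of the preceding theorem, with the hypothesis on the axiomatization satisfied vacuously by $E=\emptyset$ and with $\ICRL_{\daltnot}=\LG$ supplied by Proposition~\ref{prop:varieties:integrally-closed:lgroups}. The paper gives no separate proof, and your verification of the two hypotheses and of the identification $\ICRL_{\daltnot}=\LG$ is precisely what is needed.
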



It is not  the case that every variety $\vty{V}$ of integrally closed residuated lattices is the largest variety of residuated lattices admitting the Glivenko property with respect to the corresponding variety  $\vty{V}_{\daltnot}$ of $\ell$-groups. For example,  the variety of commutative integrally closed residuated lattices corresponds to the variety of abelian $\ell$-groups. However, for any integral residuated lattice $\m A$, the $\ell$-group $\m A_{\daltnot}$ is trivial, so the largest variety admitting the Glivenko property with respect to the variety of abelian $\ell$-groups must contain all integral residuated lattices.

We conclude this section by describing a further syntactic relationship existing between  $\ICRL$ and the variety $\IRL$ of integral residuated lattices. Recall that for any residuated lattice ${\m A}$, the {\em negative cone} of ${\m A}$  is the residuated lattice $\m{A^-}$  with universe $A^{-} = \{a\in A \mid a \le \ut\}$, monoid and lattice operations inherited from ${\m A}$, and residuals $a \ld_{-} b:=(a \ld b)\mt\ut$ and $b \rd_{\!\!-}\, a :=(b \rd a)\mt\ut$, for $a, b \in A^{-}$. Define now inductively $\ut^- = \ut$, $x^- = x \mt \ut$ for each variable $x$,  $(s \ast t)^- =s^- \ast t^-$ for $\ast \in \{\mt,\jn,\pd\}$, 
\[
(s \ld t)^- = (s^- \ld t^-) \mt\ut,\quad\text{and}\quad
(s \rd t)^- = (s^- \rd t^-) \mt\ut.
\]
It is then straightforward (see~{\cite[Lem.~5.10]{JT02}}) to prove  that for any residuated lattice $\m{A}$ and  $s,t \in \text{Tm}$,
\[
\m{A^-} \models s \eq t \iff \m{A} \models s^- \eq t^-.
\]
Since the negative cone of an integrally closed residuated lattice is integral and an integral residuated lattice is integrally closed, we obtain the following result.

\begin{proposition}\label{prop:integraltranslation}
For any $s,t \in \text{Tm}$,
\[
\IRL \models  s \eq t \iff  \ICRL \models s^- \eq t^-.
\]
\end{proposition}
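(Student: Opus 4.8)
The plan is to read off the biconditional directly from the two ingredients already assembled: the negative\hyp{}cone translation identity $\m{A^-} \models s \eq t \iff \m{A} \models s^- \eq t^-$, valid for \emph{every} residuated lattice $\m{A}$, together with Lemma~\ref{lem:bounded-MnRL-are-integral}. The organising observation is that the class of negative cones of integrally closed residuated lattices is exactly $\IRL$, that is, $\{\m{A^-} \mid \m{A}\in\ICRL\} = \IRL$. Granting this, the proposition reduces to a chain of equivalences: $\ICRL \models s^- \eq t^-$ holds iff $\m{A} \models s^- \eq t^-$ for every $\m{A}\in\ICRL$, iff $\m{A^-}\models s \eq t$ for every such $\m{A}$ by the translation identity, iff $\{\m{A^-}\mid\m{A}\in\ICRL\} \models s \eq t$, iff $\IRL \models s \eq t$ by the class equality. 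Since this is a chain of biconditionals, it yields both implications of the proposition at once.

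It therefore remains to establish the class equality, which I would prove by the two inclusions. For $\subseteq$, let $\m{A}\in\ICRL$ and observe that its negative cone is again integrally closed — a quick computation, since $a \ld_{-} a = (a \ld a)\mt\ut = \ut$ and symmetrically $a \rd_{-} a = \ut$ — and is bounded above by $\ut$; hence $\m{A^-}$ is integral by Lemma~\ref{lem:bounded-MnRL-are-integral}, so $\m{A^-}\in\IRL$. For $\supseteq$, let $\m{B}$ be integral. Then $\m{B}$ is integrally closed, and since $\ut$ is its greatest element we have $a \ld b \le \ut$ and $a \rd b \le \ut$ for all $a,b$, so the meet with $\ut$ in the definition of $\ld_{-}$ and $\rd_{-}$ is redundant and $\m{B^-} = \m{B}$; thus $\m{B}$ is itself the negative cone of an integrally closed residuated lattice.

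I do not expect any genuine obstacle here: all of the substantive content has already been packaged into the cited translation lemma and into Lemma~\ref{lem:bounded-MnRL-are-integral}. The only points requiring care are the two bookkeeping verifications above — that the negative cone of an integrally closed residuated lattice is itself integrally closed, so that Lemma~\ref{lem:bounded-MnRL-are-integral} applies, and that an integral residuated lattice coincides with its own negative cone — and both follow immediately once the definitions of $\ld_{-}$ and $\rd_{-}$ are unwound.
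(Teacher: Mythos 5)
Your proposal is correct and follows essentially the same route as the paper, which justifies the proposition in one sentence by exactly your two observations: the negative cone of an integrally closed residuated lattice is integral, and an integral residuated lattice is integrally closed (and equal to its own negative cone), combined with the cited translation identity. The only superfluous step is your appeal to Lemma~\ref{lem:bounded-MnRL-are-integral} for integrality of $\m{A^-}$, which already holds by construction since every element of $A^-$ is below $\ut$.
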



\section{Proof Theory and Decidability}\label{Sec:proof-theory-and-decidability}

In this section we obtain a sequent calculus for integrally closed residuated lattices as an extension of the standard sequent calculus for residuated lattices with a non\hyp{}standard weakening rule. We prove that this calculus admits cut\hyp{}elimination and obtain as a consequence a proof of the decidability, indeed PSPACE\hyp{}completeness, of the equational theory of integrally closed residuated lattices. 

A {\em (single\hyp{}conclusion) sequent} is an expression of the form $\Ga \seq t$ where $\Ga$ is a finite (possible empty) sequence of  terms $s_1,\ldots,s_n \in \text{Tm}$  and $t \in \text{Tm}$. Sequent rules, calculi, and derivations are defined in the usual way (see, e.g.,~\cite{GJKO07,MPT10}), and we say that a sequent $s_1,\ldots,s_n \seq t$ is {\em valid} in a class $\vty{K}$ of residuated lattices, denoted by $\models_{\vty{K}} s_1,\ldots,s_n \seq t$, if $\vty{K} \models s_1\cdots s_n \le t$, where the empty product is understood as $\ut$. 

As a base system we consider the sequent calculus $\pc{RL}$ presented in Figure~\ref{figRL}.  A sequent is derivable in $\pc{RL}$ if and only if it is valid  in $\RL$  (see, e.g.,~\cite{GJKO07,MPT10}), and $\pc{RL}$ admits cut\hyp{}elimination, i.e.,  there is an algorithm that transforms any derivation of a sequent in $\pc{RL}$ into a derivation of the sequent that does not use the cut rule.

\begin{figure}[t] 
\centering\small
\fbox{
\begin{minipage}{11.5 cm}
\[
\begin{array}{lcl}
\mbox{Identity Axioms} & & \mbox{Cut Rule}\\[.1in]
\infer[\pfa{\idr}]{s \seq s}{} & & \infer[\pfa{\cutr}]{\Ga_1, \Ga_2, \Ga_3 \seq u}{\Ga_2 \seq s & \Ga_1, s, \Ga_3 \seq u}\\[.2in]
\mbox{Left Operation Rules} & & \mbox{Right Operation Rules}\\[.1in]
\infer[\pfa{\tlr}]{\Ga_1, \ut, \Ga_2 \seq u}{\Ga_1, \Ga_2 \seq u} &  & \infer[\pfa{\trr}]{\seq\ut}{}\\[.15in]
\infer[\pfa{\rdlr}]{\Ga_1, t \rd s, \Ga_2, \Ga_3 \seq u}{\Ga_2 \seq s & \Ga_1, t, \Ga_3 \seq u} &  & 
\infer[\pfa{\rdrr}]{\Ga \seq t \rd s}{\Ga, s \seq t}\\[.15in]
\infer[\pfa{\ldlr}]{\Ga_1, \Ga_2, s \ld t, \Ga_3 \seq u}{\Ga_2 \seq s & \Ga_1, t, \Ga_3 \seq u} &  & 
\infer[\pfa{\ldrr}]{\Ga \seq s \ld t}{s, \Ga \seq t}\\[.15in]
\infer[\pfa{\pdlr}]{\Ga_1,  s \pd t, \Ga_2 \seq u}{\Ga_1,s,t, \Ga_2 \seq u} &  &
\infer[\pfa{\pdrr}]{\Ga_1, \Ga_2 \seq s \pd t}{\Ga_1 \seq s & \Ga_2 \seq t}\\[.15in]
\infer[\pfa{\alr_1}]{\Ga_1, s \mt t, \Ga_2 \seq u}{\Ga_1, s, \Ga_2 \seq u} & & 
\infer[\pfa{\orr_1}]{\Ga \seq s \jn t}{\Ga \seq s}\\[.15in]
\infer[\pfa{\alr_2}]{\Ga_1, s \mt t, \Ga_2 \seq u}{\Ga_1, t, \Ga_2 \seq u} & & 
\infer[\pfa{\orr_2}]{\Ga \seq s \jn t}{\Ga \seq t}\\[.15in]
\infer[\pfa{\olr}]{\Ga_1, s \jn t, \Ga_2 \seq u}{\Ga_1, s, \Ga_2 \seq u & \Ga_1, t, \Ga_2 \seq u}
 & \quad  \  & \infer[\pfa{\arr}]{\Ga \seq s \mt t}{\Ga \seq s & \Ga \seq t}
 \end{array}
\]
\caption{The Sequent Calculus $\pc{RL}$}
\label{figRL}
\end{minipage}}
\end{figure}

We define $\pc{IcRL}$ to be the sequent calculus consisting of the rules of $\pc{RL}$ together with the rule
\[
\infer[\pfa{\lgpw}]{\Ga, \De, \PI \seq u}{\Ga, \PI\seq u & \models_\LG \De \seq\ut}.
\]
This may be viewed as a special case of the weakening rule 
\[
\infer[\pfa{(\textsc{w})}]{\Ga, \De, \PI \seq u}{\Ga, \PI \seq u}
\]
where $\models_\LG \De \seq\ut$ is a decidable (indeed co\hyp{}NP\hyp{}complete) side\hyp{}condition~\cite{HM79,GM16}. The  side\hyp{}condition can also be understood proof\hyp{}theoretically as requiring a derivation of $\De \seq\ut$ in some calculus for $\ell$-groups, such as the analytic hypersequent calculus  provided in~\cite{GM16}.  Let us remark further that, since applications of $\lgpw$ can be pushed upwards in derivations, this rule can be replaced with  axioms of the form $\Ga, u, \Pi \seq u$ and  $\De \seq \ut$ with side\hyp{}conditions $\models_{\LG} \Ga \seq \ut$ and $\models_{\LG} \Pi\seq \ut$, and $\models_{\LG} \De \seq \ut$, respectively.

\begin{proposition}\label{prop:soundness-and-completeness-MnRL}
A sequent is derivable in  $\pc{IcRL}$ if and only if it is valid in all integrally closed residuated lattices.
\end{proposition}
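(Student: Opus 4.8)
The plan is to prove the two directions separately, handling the standard rules of $\pc{RL}$ by the known soundness and completeness of that calculus for $\RL$, and concentrating the work on the single new rule $\lgpw$. For soundness it suffices to check that each rule preserves validity in $\ICRL$. Since $\ICRL \subseteq \RL$, every rule of $\pc{RL}$ does so by the soundness of $\pc{RL}$ for $\RL$. For $\lgpw$, suppose its side\hyp{}condition $\models_\LG \De \seq \ut$ holds and write $\delta$ for the product of the terms in $\De$ (read as $\ut$ if $\De$ is empty): this says $\LG \models \delta \le \ut$, which by Corollary~\ref{cor:Glivenko:MTRL:vs:LG} is equivalent to $\ICRL \models \delta \le \ut$. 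Hence, if the premise $\Ga, \PI \seq u$ is valid in $\ICRL$, then for every $\m A \in \ICRL$ and every valuation, writing $\gamma, \pi, u$ for the interpretations of the respective (products of) terms, we have $\gamma\delta\pi \le \gamma\ut\pi = \gamma\pi \le u$, using $\delta \le \ut$ and the order\hyp{}preservation of multiplication. Thus the conclusion $\Ga, \De, \PI \seq u$ is valid in $\ICRL$, so $\lgpw$ is sound.

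For completeness I would build the Lindenbaum–Tarski algebra. Define a preorder on $\m{Tm}$ by $s \preceq t$ iff $s \seq t$ is derivable in $\pc{IcRL}$; this is reflexive by $\idr$ and transitive by $\cutr$. Let $\equiv$ be the induced equivalence and let $\m L$ be the quotient $\m{Tm}/{\equiv}$. Using only the rules of $\pc{RL}$, together with $\cutr$ to verify that the operations are congruent, $\m L$ is a residuated lattice in which $[s] \le [t]$ holds precisely when $s \seq t$ is derivable; this is exactly the standard completeness argument for $\pc{RL}$, which I would cite rather than reproduce.

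It then remains to show $\m L \in \ICRL$ and to conclude. For any term $s$, the sequent $\ut \seq s \ld s$ is derivable already in $\pc{RL}$ (apply $\ldrr$ and $\tlr$ to reduce to $s \seq s$), while $s \ld s \seq \ut$ is derivable by one application of $\lgpw$ with $\Ga,\PI$ empty, $\De = s \ld s$, and $u = \ut$: the left premise $\seq \ut$ is an instance of $\trr$, and the side\hyp{}condition $\models_\LG s \ld s \seq \ut$ holds since $a \ld a = \ut$ in every $\ell$-group. Hence $s \ld s \equiv \ut$, and symmetrically $s \rd s \equiv \ut$, so $\m L$ satisfies $x \ld x \eq \ut$ and $x \rd x \eq \ut$, i.e. $\m L \in \ICRL$. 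Now if $s_1,\ldots,s_n \seq t$ is valid in all integrally closed residuated lattices, i.e. $\ICRL \models s_1\cdots s_n \le t$, then in particular $\m L$ satisfies this under the valuation $x \mapsto [x]$, giving $[s_1]\pd\cdots\pd[s_n] = [s_1\cdots s_n] \le [t]$, so that $s_1\cdots s_n \seq t$ is derivable. Since $s_1\cdots s_n \seq t$ and $s_1,\ldots,s_n \seq t$ are interderivable using $\pdlr$, $\pdrr$, and $\cutr$, the latter sequent is derivable as well.

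The only nonroutine point is the treatment of $\lgpw$, and it enters in dual ways. For soundness it is essential that the $\ell$-group side\hyp{}condition transfers to $\ICRL$, which is precisely the content of the Glivenko Corollary~\ref{cor:Glivenko:MTRL:vs:LG}; this is where the structural work of Section~\ref{Sec:structure} is used. For completeness the same rule is exactly what forces the Lindenbaum algebra to be integrally closed, by supplying derivations of $s \ld s \seq \ut$ and $s \rd s \seq \ut$. Everything else — the residuated\hyp{}lattice structure of $\m L$ and the interderivability manipulations with the product rules — is standard, so I expect no real obstacle beyond correctly invoking the Glivenko property for the new rule.
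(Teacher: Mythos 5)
Your proposal is correct and follows essentially the same route as the paper: soundness by checking that $\lgpw$ preserves validity in $\ICRL$ via Corollary~\ref{cor:Glivenko:MTRL:vs:LG}, and completeness via the Lindenbaum--Tarski quotient, which is integrally closed precisely because $\lgpw$ yields derivations of $x\ld x \seq \ut$ and $x\rd x\seq\ut$. The only difference is that you spell out the derivation of $s\ld s\seq\ut$ explicitly, which the paper leaves implicit.
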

\begin{proof}
For the right\hyp{}to\hyp{}left direction, we construct a Lindenbaum\hyp{}Tarski algebra. Namely, it can be shown (as usual) that the binary relation $\Theta$ defined on $\text{Tm}$ by
\[
u \mathrel{\Theta} v \enspace :\Longleftrightarrow \enspace u \seq v \text{ and } u \seq v \text{ are derivable in }\pc{IcRL}
\]
is a congruence on $\m{Tm}$ and  that the quotient $\m{Tm}/\Theta$ is an integrally closed (since $x \ld x \seq \ut$ and  $x \rd x \seq \ut$ are derivable in $\pc{IcRL}$) residuated lattice satisfying
\[
u/\Theta \le v/\Theta\ \iff\ u \seq v \text{ is derivable in }\pc{IcRL}.
\]
Suppose that $\models_{\ICRL} s_1,\ldots, s_n \seq t$ and consider the homomorphism from $\m{Tm}$ to $\m{Tm}/\Theta$ mapping each term $u$ to the equivalence class $u/\Theta$. Since $s_1 \cdots s_n \le t$ is valid in $\m{Tm}/\Theta$, it follows that $ s_1 \cdots s_n / \Theta \le t / \Theta$ and hence that $s_1 \cdots s_n \seq t$ is derivable in $\pc{IcRL}$. An application of $\cutr$ with the derivable sequent $s_1,\ldots,s_n \seq s_1\cdots s_n$ shows that also $s_1,\ldots,s_n \seq t$ is derivable in $\pc{IcRL}$. 

For the left\hyp{}to\hyp{}right direction, we recall (see, e.g.,~\cite{GJKO07,MPT10}) that the rules of $\pc{RL}$ preserve validity of sequents in $\RL$ and it suffices therefore to show that the rule $\lgpw$ preserves validity in $\ICRL$. Suppose then that $\models_{\ICRL} \Ga, \PI \seq u$ and $\models_{\LG} \De \seq \ut$. Writing $s_1$, $s_2$, and  $t$ for the products of the terms in $\Ga$, $\PI$, and $\De$, respectively, $\ICRL \models s_1 s_2  \leq u$ and $\LG \models t \leq \ut$. By Corollary~\ref{cor:Glivenko:MTRL:vs:LG}, we obtain $\ICRL \models t \leq \ut$ and hence $\ICRL \models s_1  t s_2 \leq u$. That is, $\models_{\ICRL}\Ga, \De, \PI \seq u$. 
\end{proof}

\begin{proposition}\label{prop:cut-elimination}
$\pc{IcRL}$ admits cut\hyp{}elimination.
\end{proposition}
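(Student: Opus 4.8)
The plan is to run the standard Gentzen-style argument and to reduce everything to the cut-elimination already available for $\pc{RL}$, isolating the new rule $\lgpw$. Because no rule of $\pc{IcRL}$ involves contraction, it suffices to prove a single-cut reduction lemma: from (cut-free) derivations of $\Ga_2 \seq s$ and $\Ga_1, s, \Ga_3 \seq u$ one can build a cut-free derivation of $\Ga_1, \Ga_2, \Ga_3 \seq u$. Eliminating an uppermost cut repeatedly then removes all cuts. I would prove the reduction lemma by the usual double induction, outer on the number of connectives in the cut formula $s$ and inner on the sum of the heights of the two given derivations, giving priority to inspecting the last rule of the right premise, then the left, so that every combination (including both premises ending in $\lgpw$) is covered exactly once.

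Every case in which neither premise is concluded by $\lgpw$ is treated precisely as in the cut-elimination proof for $\pc{RL}$: when $s$ is not principal in the last rule applied to one premise the cut permutes upward and the inner measure drops, and when $s$ is principal in both premises the cut is replaced by cuts on the immediate subformulas of $s$ and the outer measure drops. If the left premise $\Ga_2 \seq s$ ends with $\lgpw$, the cut formula $s$ is its succedent and therefore cannot lie in the weakened block; hence the cut permutes above that $\lgpw$ application and the inner induction hypothesis applies.

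The one genuinely new case is when the right premise $\Ga_1, s, \Ga_3 \seq u$ is concluded by $\lgpw$, with conclusion $\Ga', \De, \PI' \seq u$, premise $\Ga', \PI' \seq u$, and side-condition $\models_\LG \De \seq \ut$. If the displayed occurrence of $s$ falls in the active context $\Ga'$ or $\PI'$, I permute the cut above $\lgpw$ exactly as above. The crux is when $s$ lies inside the weakened block, say $\De = \De_1, s, \De_2$ with $\models_\LG \De_1, s, \De_2 \seq \ut$. Here, rather than permuting, I would absorb the cut into the side-condition. Writing $p$, $d_1$, $d_2$ for the products of the terms in $\Ga_2$, $\De_1$, $\De_2$ (empty products read as $\ut$), derivability of $\Ga_2 \seq s$ gives $\ICRL \models p \le s$ by soundness of $\pc{IcRL}$ (the left-to-right direction of Proposition~\ref{prop:soundness-and-completeness-MnRL}, which does not rely on cut-elimination), and hence $\LG \models p \le s$ since every $\ell$-group is integrally closed. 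Combining this with the side-condition $\LG \models d_1 \pd s \pd d_2 \le \ut$ and monotonicity of the product, we obtain $\LG \models d_1 \pd p \pd d_2 \le \ut$, that is, $\models_\LG \De_1, \Ga_2, \De_2 \seq \ut$. A single application of $\lgpw$ to the cut-free derivation of $\Ga', \PI' \seq u$ then yields the required cut-free derivation of $\Ga_1, \Ga_2, \Ga_3 \seq u$, discarding the cut altogether.

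I expect this last sub-case to be the main obstacle, as it is the only one where the cut can neither be permuted upward nor reduced to cuts on smaller formulas; it succeeds precisely because the $\LG$-side-condition of $\lgpw$ is preserved when a term is replaced by one lying below it in $\LG$, which rests on the inclusion $\LG \subseteq \ICRL$. The remaining work is routine bookkeeping: confirming that each permutation strictly decreases the chosen well-founded measure and that the inputs to the newly introduced cuts are themselves cut-free, so that the induction goes through.
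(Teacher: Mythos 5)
Your proposal is correct and follows essentially the same route as the paper: the same lexicographic induction on cut-formula complexity and summed derivation heights, permutation of the cut above $\lgpw$ when the cut formula lies outside the weakened block, and, in the crucial case where the cut formula sits inside the weakened block, the same absorption of the cut into the side-condition via soundness of $\pc{IcRL}$ together with the inclusion $\LG \subseteq \ICRL$ to obtain $\models_\LG \De_1, \Ga_2, \De_2 \seq \ut$. No substantive differences to report.
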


\begin{proof}
It suffices (as usual) to prove that if there exist cut\hyp{}free derivations $d_1$ of $\Ga_2 \seq  s$ and $d_2$ of $\Ga_1,s,\Ga_3 \seq u$ in $\pc{IcRL}$, then there is a cut\hyp{}free derivation of $\Ga_1,\Ga_2,\Ga_3 \seq u$ in $\pc{IcRL}$, proceeding by induction on the lexicographically ordered pair $\langle c,h\rangle$ where $c$ is the term complexity of $s$ and $h$ is the sum of the heights of the derivations $d_1$ and $d_2$. The cases where the last steps in the derivations $d_1$ and $d_2$ are applications of rules of $\pc{RL}$ are standard (see, e.g., \cite[Chap.~4.1]{GJKO07}). We therefore just consider the cases where the last step is an application of the rule $\lgpw$.

Suppose first that $\Ga_2=\PI_1,\De,\PI_2$ and $d_1$ ends with
\[
\infer[\pfa{\lgpw}]{\PI_1,\De,\PI_2 \seq  s}{
 \infer{\PI_1,\PI_2 \seq  s}{\vdots\, d'_1} & \models_\LG \De \seq\ut}
\]
By the induction hypothesis, we obtain a cut\hyp{}free derivation $d_3$ of the sequent $\Ga_1,\PI_1,\PI_2,\Ga_3 \seq  u$  in $\pc{IcRL}$, and hence a cut\hyp{}free derivation  in $\pc{IcRL}$ ending with
\[
\infer[\pfa{\lgpw}]{\Ga_1,\PI_1,\De,\PI_2,\Ga_3 \seq  u}{
 \infer{\Ga_1,\PI_1,\PI_2,\Ga_3 \seq u}{\vdots\, d_3} & \models_\LG \De \seq\ut}
\]
Suppose next that $\Ga_3 =\PI_1,\De,\PI_2$ and $d_2$ ends with
\[
\infer[\pfa{\lgpw}]{\Ga_1,s,\PI_1,\De,\PI_2\seq  u}{
 \infer{\Ga_1,s,\PI_1,\PI_2 \seq  u}{\vdots\, d'_2} & \models_\LG \De \seq \ut}
\]
By the induction hypothesis, we obtain a cut\hyp{}free derivation $d_3$ of the sequent $\Ga_1,\Ga_2,\PI_1,\PI_2 \seq  u$  in $\pc{IcRL}$, and hence a cut\hyp{}free derivation  in $\pc{IcRL}$ ending with
\[
\infer[\pfa{\lgpw}]{\Ga_1,\Ga_2,\PI_1,\De,\PI_2 \seq  u}{
 \infer{\Ga_1,\Ga_2,\PI_1,\PI_2 \seq  u}{\vdots\, d_3} & \models_\LG \De \seq\ut}
\]
The analogous case where $\Ga_1 = \PI_1,\De,\PI_2$ is very similar.

Suppose finally that $\Ga_1,s,\Ga_3 = \PI_1,\De_1,s,\De_2,\PI_2$ and $d_2$ ends with
\[
\infer[\pfa{\lgpw}]{\PI_1,\De_1,s,\De_2,\PI_2\seq  u}{
 \infer{\PI_1,\PI_2 \seq  u}{\vdots\, d'_2} & \models_\LG \De_1,s,\De_2 \seq \ut}
\]
By Proposition~\ref{prop:soundness-and-completeness-MnRL}, we have $\models_{\ICRL} \Ga_2 \seq  s$ and hence $\models_{\LG}  \Ga_2 \seq  s$. But then also $\models_{\LG}  \De_1,\Ga_2,\De_2 \seq \ut$ and we obtain a cut\hyp{}free derivation  in $\pc{IcRL}$ ending with
\[
\infer[\pfa{\lgpw}]{\PI_1,\De_1,\Ga_2,\De_2,\PI_2\seq  u}{
  \infer{\PI_1,\PI_2 \seq  u}{\vdots\, d'_2}  & \models_\LG \De_1,\Ga_2,\De_2 \seq \ut}\qedhere
\] 
\end{proof} 

We use this cut\hyp{}elimination result to establish the decidability of the equational theory of $\ICRL$, noting that its quasiequational theory can be shown to be undecidable using the fact that the quasiequational theory of $\ell$-groups is undecidable~\cite{GG83}.

\begin{theorem}\label{thm:integrally-closed-decidable}
The equational theory of integrally closed residuated lattices is decidable, indeed PSPACE\hyp{}complete.
\end{theorem}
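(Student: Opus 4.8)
The plan is to decide an equation $s \eq t$ by reducing it, via Proposition~\ref{prop:soundness-and-completeness-MnRL}, to the derivability in $\pc{IcRL}$ of the two sequents $s \seq t$ and $t \seq s$ (note that each has a singleton antecedent, so these express exactly $\ICRL \models s \leq t$ and $\ICRL \models t \leq s$), and then to invoke the cut\hyp{}elimination result of Proposition~\ref{prop:cut-elimination}, so that it suffices to search for \emph{cut\hyp{}free} derivations. First I would record the subformula property for cut\hyp{}free $\pc{IcRL}$: reading each rule from conclusion to premises, every formula occurring in a premise is a subformula of some formula in the conclusion (for $\lgpw$ the premise is obtained by simply deleting $\De$). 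Hence every sequent in a cut\hyp{}free derivation of $\Ga \seq u$ is built from subformulas of $\Ga \seq u$.

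The key combinatorial observation is that the total number of symbol occurrences in a sequent strictly decreases from conclusion to each premise in every non\hyp{}axiom rule of $\pc{IcRL}$: the logical rules replace a compound formula by its immediate subformulas (e.g.\ $\pdlr$ turns $s \pd t$ into the pair $s,t$), the context\hyp{}splitting rules distribute a subsequence of the antecedent among the premises, and $\lgpw$ discards $\De$. Consequently the height of any cut\hyp{}free derivation of $\Ga \seq u$, and the size of every sequent appearing in it, are both bounded linearly in the size of $\Ga \seq u$. This already yields termination of backward proof search, and hence decidability, since the sole side\hyp{}condition $\models_\LG \De \seq \ut$ is itself decidable~\cite{HM79,GM16}.

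For the $\mathrm{PSPACE}$ upper bound I would cast backward proof search as an alternating polynomial\hyp{}time computation. The machine existentially guesses a bottommost rule application together with the (polynomially many) ways of splitting the antecedent and, for $\lgpw$, the contiguous block $\De$; it then branches universally over the premises. By the bounds above, each configuration stores a single sequent of linear size and the computation tree has linear depth, so the whole procedure runs in alternating polynomial time, i.e.\ in $\mathrm{PSPACE}$. The side\hyp{}condition $\models_\LG \De \seq \ut$ is co\hyp{}NP\hyp{}complete~\cite{HM79,GM16} and so can be verified within $\mathrm{PSPACE}$, for instance by a universal subcomputation, without affecting the bound.

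Finally, for $\mathrm{PSPACE}$\hyp{}hardness I would exhibit a polynomial\hyp{}time reduction from a known $\mathrm{PSPACE}$\hyp{}complete equational theory. The natural vehicle is the theory of integral residuated lattices: the translation $t \mapsto t^-$ is computable in polynomial time, and by Proposition~\ref{prop:integraltranslation} it satisfies $\IRL \models s \eq t \iff \ICRL \models s^- \eq t^-$, so it reduces the equational theory of $\IRL$ to that of $\ICRL$; as the former is known to be $\mathrm{PSPACE}$\hyp{}hard, so is the latter. I expect the main obstacle to lie in the membership argument rather than in hardness: one must check carefully that the non\hyp{}standard rule $\lgpw$ spoils neither the subformula property nor the monotone size measure bounding proof search, and that its co\hyp{}NP side\hyp{}condition can be interleaved with the alternating search while keeping the entire computation within polynomial space.
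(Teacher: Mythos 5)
Your proposal is correct and follows essentially the same route as the paper: PSPACE-hardness via the polynomial translation $t \mapsto t^-$ from the equational theory of $\IRL$ (Proposition~\ref{prop:integraltranslation}), and membership via cut-elimination plus the observation that cut-free $\pc{IcRL}$ proof search involves only polynomially bounded sequents and branches, with the coNP side-condition $\models_\LG \De \seq \ut$ absorbed into the space bound. The only (inessential) difference is that you phrase the upper bound as an alternating polynomial-time search, whereas the paper nondeterministically guesses and checks a cut-free derivation branch by branch and invokes Savitch's theorem.
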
 
\begin{proof}
For PSPACE\hyp{}hardness, it suffices to recall that the equational theory of integral residuated lattices is PSPACE\hyp{}complete~\cite{HT11} and consider the translation described in Proposition~\ref{prop:integraltranslation}. For inclusion, it suffices by Savitch's theorem, which states that NPSPACE = PSPACE~\cite{Sav70}, to observe that a non\hyp{}deterministic PSPACE algorithm for deciding validity of sequents is obtained by guessing and checking cut\hyp{}free derivations in $\pc{IcRL}$. The correctness of a derivation is checked branch by branch, recording only the branch of the derivation from the root to the current point. Note also that for the application of the rule $\lgpw$, we use the fact that the equational theory of $\LG$ is coNP\hyp{}complete~\cite{GM16} and therefore in PSPACE.
\end{proof}

Let us remark that the cut\hyp{}elimination argument of Proposition~\ref{prop:cut-elimination} applies also to sequent calculi for other varieties of integrally closed residuated lattices. First, let $\vty{V}$ be any variety of residuated lattices axiomatized relative to $\RL$ by a set of $\{\jn,\pd,\ut\}$\hyp{}equations. It is shown in~\cite[Sec.~3]{GJ13} that $\vty{V}$ can then be axiomatized by ``simple" equations of the form $s \le t_1 \jn \ldots \jn t_n$ where each of $s,t_1,\ldots,t_n$ is either $\ut$ or a product of variables and $s$ contains at most one occurrence of any variable. Moreover, a sequent calculus for $\vty{V}$ that admits cut\hyp{}elimination is obtained by adding to $\pc{RL}$ for each such equation $s \le t_1 \jn \ldots \jn t_n$, a ``simple" rule
\[
\infer{\Ga, \Psi(s), \Pi \seq  u}{\Ga, \Psi(t_1), \Pi \seq u & \dots & \Ga, \Psi(t_n), \Pi \seq u}
\]
where  $\Psi(\ut)$ is the empty sequence and  $\Psi(x_1 \cdots x_m)$ (for not necessarily distinct variables $x_1,\ldots,x_m$) is the sequence of corresponding metavariables $\Ga_{x_1},\dots,\Ga_{x_m}$. We obtain a sequent calculus for the variety $\vty{W}$ of integrally closed members of $\vty{V}$ that also admits cut\hyp{}elimination by adding the rule
\[
\infer[\pfa{(\vty{W}_{\daltnot}\textsc{-w})}]{\Ga, \De, \PI \seq u}{\Ga, \PI\seq u & \models_{\vty{W}_{\daltnot}} \De \seq\ut}
\]
In particular, a sequent calculus for the variety of commutative integrally closed residuated lattices is obtained by adding to $\pc{IcRL}$ the (left) exchange rule
\[
\infer[\pfa{\elr}]{\Ga_1, \Pi_1,\Pi_2, \Ga_2 \seq u}{\Ga_1,\Pi_2,\Pi_1,\Ga_2 \seq u}
\]
and replacing $\LG$ with the variety of abelian $\ell$-groups in the rule $\lgpw$. It can then also be shown, as in the proof of Theorem~\ref{thm:integrally-closed-decidable}, using the fact that abelian $\ell$-groups have a coNP\hyp{}complete equational theory, that the equational theory of commutative integrally closed residuated lattices is PSPACE\hyp{}complete. In general, however, decidability of such a variety of integrally closed residuated lattices $\vty{W}$ will depend not only on the decidability of the equational theory of $\vty{W}_{\daltnot}$, but also on the additional simple rules for $\vty{V}$.

Note finally that the equations $x \ld x \eq \ut$ and $x \rd x \eq \ut$ belong to the class $\mathcal{N}_2$ described in~\cite{CGT12}, but are not acyclic in the sense defined there and the method for constructing analytic sequent calculi in that paper therefore does not apply. Indeed, there can be no extension of $\pc{RL}$ with structural analytic rules (as defined in~\cite{CGT12}, and including the simple rules of~\cite{GJ13})  for $\ICRL$ that admits cut\hyp{}elimination. If this were the case, then, by~\cite[Thm.~6.3]{CGT12}, the variety $\ICRL$ would be closed under MacNeille completions. However, by Lemma~\ref{lem:bounded-MnRL-are-integral}, any bounded integrally closed residuated lattice is integral and the completion of an integrally closed residuated lattice $\m{A}$ will therefore be integrally closed only if $\m{A}$ is already integral. 


\section{Sirmonoids and Pseudo BCI\hyp{}Algebras}\label{Sec:BCIetc}

In this section we relate suitable reducts of integrally closed residuated lattices to semi\hyp{}integral residuated pomonoids,  studied in~\cite{RvA00,EK18}, and pseudo BCI\hyp{}algebras, defined in~\cite{DY08} as non\hyp{}commutative versions of BCI\hyp{}algebras~\cite{Ise66}.

A {\em residuated pomonoid} is a structure $\m{M} = \pair{M,\preceq,\pd,\ld,\rd,\ut}$ such that $\pair{M,\pd, \ut}$ is a monoid, $\preceq$ is a partial order on $M$, and $\ld,\rd$ are binary operations on $M$ satisfying $b \preceq a \ld c \iff  ab \preceq c  \iff a \preceq c \rd b$ for all $a,b,c\in M$. Such a structure is called {\em semi\hyp{}integral} if $\ut$ is a maximal element of $\pair{M,\preceq}$, or, equivalently, for all $a,b \in M$,
\[
a \preceq b \iff a \ld b = \ut \iff b \rd a = \ut.
\]
It is not hard to show that a semi\hyp{}integral residuated pomonoid (or \emph{sirmonoid} for short) may be identified with an algebraic structure $\m{S} = \pair{S,\pd, \ld,\rd,\ut}$ of type $\pair{2,2,2,0}$ satisfying the following equations and quasiequation:
\begin{enumerate}[font=\upshape, label={(\roman*)}]
\item $((x\ld z)\rd (y\ld z))\rd (x\ld y) \eq \ut$
\item $(y\rd x)\ld ((z\rd y)\ld (z\rd x)) \eq \ut$
\item $\ut\ld x \eq x$
\item $x\rd \ut \eq x$
\item $(x\pd y)\ld z \eq y\ld (x\ld z)$
\item $x\ld y\eq \ut \et y\ld x \eq \ut \implies x\eq y$.
\end{enumerate}
We let  $\SIRM$ denote the quasivariety of sirmonoids. 

Any group $\m{G} = \pair{G,\pd,^{-1},\ut}$ is term\hyp{}equivalent to a sirmonoid $\m{S}$ satisfying the equation $(x \ld \ut) \ld \ut \eq x$, noting that in this case, $a \preceq b$ if and only if $a = b$ for all $a, b \in S$. Given a group $\m{G}$, let  $a \ld b := a^{-1}\pd b$ and $b \rd a := b \pd a^{-1}$, and conversely, given a sirmonoid $\m{S}$ satisfying  $(x \ld \ut) \ld \ut \eq x$, let  $a^{-1} := a \ld \ut$. For convenience, we also call such a sirmonoid a {\em group} and denote the variety of these algebras by $\GRP$.

An algebraic structure $\m{B} = \pair{B,\ld,\rd,\ut}$ of type $\pair{2,2,0}$ satisfying the equations (i)--(iv) and quasiequation (vi) is called a {\em pseudo BCI\hyp{}algebra}. The $\{\ld,\rd,\ut\}$\hyp{}reduct of any sirmonoid is clearly a pseudo BCI\hyp{}algebra. More notably, every pseudo BCI\hyp{}algebra is a subreduct of a sirmonoid~\cite[Thm.~3.3]{EK18}, and the quasiequational theory of sirmonoids is therefore a conservative extension of the quasiequational theory of pseudo BCI\hyp{}algebras. In what follows, we consider to what extent similar relationships hold between sirmonoids and integrally closed residuated lattices.

\begin{lemma}\label{lem:reductsirmonoid}
A residuated lattice is  integrally closed if and only if its $\{\pd,\ld,\rd,\ut\}$\hyp{}reduct is a sirmonoid. 
\end{lemma}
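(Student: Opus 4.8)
The plan is to verify the two directions separately, grouping the sirmonoid axioms (i)--(vi) according to how much of integral closure each actually requires.

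For the left-to-right direction, suppose $\m A$ is integrally closed and consider its $\{\pd,\ld,\rd,\ut\}$-reduct. Equations (iii)--(v) and the quasiequation (vi) hold in \emph{every} residuated lattice: (iii) and (iv) are the standard residuation facts $\ut\ld a = a$ and $a\rd\ut = a$; (v) is the identity $(ab)\ld c\eq b\ld(a\ld c)$; and (vi) holds because $a\ld b = \ut$ forces $\ut\le a\ld b$, i.e.\ $a\le b$, so $a\ld b = \ut = b\ld a$ gives $a\le b$ and $b\le a$, whence $a = b$. Thus the whole content of this direction is to establish (i) and (ii), and here I would invoke the Glivenko property rather than compute inside $\m A$.

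The key observation is that each of (i) and (ii) is an equation $s\eq\ut$ whose ``$\ut\le s$'' half already holds in every residuated lattice. For (i), residuation reduces $\ut\le s$ to $a(a\ld b)(b\ld c)\le c$, which is immediate from $a(a\ld b)\le b$ and $b(b\ld c)\le c$; the computation for (ii) is symmetric. It therefore remains only to prove $\ICRL\models s\le\ut$. A routine computation in the group signature shows $\LG\models s\eq\ut$ for the terms $s$ of both (i) and (ii), so in particular $\LG\models s\le\ut$, and Corollary~\ref{cor:Glivenko:MTRL:vs:LG} upgrades this at once to $\ICRL\models s\le\ut$. Combining the two inequalities yields $\ICRL\models s\eq\ut$, so the reduct of $\m A$ satisfies (i) and (ii) and is therefore a sirmonoid.

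For the converse, suppose the reduct of $\m A$ is a sirmonoid. Identifying it with the associated semi-integral residuated pomonoid, whose order satisfies $a\preceq b\iff a\ld b = \ut$, reflexivity of $\preceq$ yields $a\ld a = \ut$ for all $a\in A$. Hence $\m A$ satisfies $x\ld x\eq\ut$ and is integrally closed by Proposition~\ref{prop:ucyclicmon}. (If one prefers a purely syntactic argument, $x\ld x\eq\ut$ can instead be derived directly from (i), (iii), and (iv).)

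I expect the main obstacle to be exactly equations (i) and (ii) in the forward direction: because $\ut$ need not be the greatest element, the familiar integral manipulations are unavailable and a direct derivation from $x\ld x\eq\ut$ is awkward. The decisive move is to notice that, once the trivial ``$\ut\le s$'' halves are discharged, (i) and (ii) become inequalities $s\le\ut$ valid in $\LG$, so that the Glivenko transfer of Corollary~\ref{cor:Glivenko:MTRL:vs:LG} replaces any hands-on calculation inside $\m A$.
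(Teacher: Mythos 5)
Your proof is correct, but it takes a genuinely different route from the paper's. You work with the \emph{equational} presentation of sirmonoids and verify axioms (i)--(vi) for the reduct directly: (iii)--(vi) hold in every residuated lattice, and for (i) and (ii) you split each equation $s \eq \ut$ into the half $\ut \le s$, which follows by residuation in any residuated lattice, and the half $s \le \ut$, which you transfer from $\LG$ via Corollary~\ref{cor:Glivenko:MTRL:vs:LG}; both group computations do check out. The paper instead works with the \emph{ordered} presentation: it defines $a \preceq b$ by $a \ld b = \ut$, uses the homomorphism onto the $\ell$-group $\m{A}_{\daltnot}$ to show that $a \ld b = \ut$ implies $b \rd a = \ut$, proves that $\preceq$ is a partial order --- invoking the same Glivenko corollary, but for the $\ell$-group validity $x \ld z \le (x\ld y)\pd(y \ld z)$, to obtain transitivity --- and then checks residuation with respect to $\preceq$. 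So both arguments hinge on Corollary~\ref{cor:Glivenko:MTRL:vs:LG} at the same critical point, just applied to different $\ell$-group facts. Your version is more modular and isolates exactly which axioms carry content, but it leans on the paper's unproved assertion that the ordered and equational presentations of sirmonoids coincide (and you use that identification again, in the other direction, in the converse); the paper's proof constructs the semi-integral residuated pomonoid explicitly and is self-contained on that point. The converse direction is essentially identical in both treatments: reflexivity of $\preceq$ yields $x \ld x \eq \ut$, which suffices by Proposition~\ref{prop:ucyclicmon}.
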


\begin{proof}
Let $\m{A}$ be a residuated lattice. If its $\{\pd,\ld,\rd,\ut\}$\hyp{}reduct is a sirmonoid, then, since the induced partial order $\preceq$ is reflexive, $a \ld a = a \rd a = \ut$ for all $a \in A$, i.e., $\m{A}$ is integrally closed. Conversely, suppose that $\m{A}$ is integrally closed. Consider first $a,b \in A$ such that $a \ld b = \ut$. Then $\ut \le a\ld b$ and, by residuation twice, $\ut \le b \rd a$. Moreover, $\ut = \daltnot (a \ld b) = \daltnot a \ld \daltnot b$ and, since $\m{A}_{\daltnot}$ is an $\ell$-group, also $\ut = \daltnot b \rd \daltnot a = \daltnot (b \rd a) \ge b \rd a$. That is, $a \ld b = \ut$ implies $b \rd a = \ut$ and we define 
\[
a \preceq b \iff a \ld b = \ut \iff b \rd a = \ut.
\]
Since $\m{A}$ is integrally closed, $\preceq$ is reflexive. Also, if $a \preceq b$ and $b\preceq a$, then $\ut \le a \ld b$ and $\ut \le b \ld a$, yielding $a\le b$ and $b \le a$, i.e., $a=b$. So $\preceq$ is anti\hyp{}symmetric. Suppose now that $a \preceq b$ and $b\preceq c$. Then $\ut = a \ld b$ and $\ut = b \ld c$.  Hence $a\le b$ and $b \le c$, so $a \le c$, yielding $\ut \le a \ld c$. Note now that $x \ld z \le (x \ld y)\pd(y\ld z)$ holds in all $\ell$-groups and hence $(x \ld z) \pd (((x \ld y)\pd(y\ld z)) \ld \ut) \le \ut$ holds in all integrally closed residuated lattices by Corollary~\ref{cor:Glivenko:MTRL:vs:LG}. So $\ut\le a \ld c = (a \ld c) \pd \ut = (a \ld c) \pd (((a \ld b)\pd(b\ld c)) \ld \ut) \le \ut$, i.e., $a \ld c = \ut$ and $a\preceq c$. That is, $\preceq$ is transitive and hence a partial order.

Moreover, for all $a,b,c \in A$,
\begin{align*}
b \preceq a \ld c & \iff  (ab) \ld c = b \ld (a \ld c) = \ut\\
&  \iff ab \preceq c\\
& \iff  c \rd (ab) =  (c \rd b) \rd a =  \ut\\
&  \iff a \preceq c \rd b.  
\end{align*}
That is, the $\{\pd,\ld,\rd,\ut\}$\hyp{}reduct of $\m{A}$ is a sirmonoid.
\end{proof}

Not every sirmonoid is a subreduct of an integrally closed residuated lattice, however. By Proposition~\ref{prop:torsion-free}, $\{\pd,\ld,\rd,\ut\}$\hyp{}subreducts of  integrally closed residuated lattices  satisfy  $x^n \eq \ut \implies x \eq \ut$ for all $n \ge 1$, but there are sirmonoids (e.g., finite groups), that do not satisfy all of these quasiequations. On the other hand, it is known that any sirmonoid satisfying $x \preceq \ut$ is a subreduct of an integral (and hence integrally closed) residuated lattice \cite{Kur05}. 

The quasiequational theory of integrally closed residuated lattices is, as we have just seen, not a conservative extension of the quasiequational theory of sirmonoids. However, as we will show in Theorem~\ref{thm:eq-MTRL-cons-ext-SIRM}, such a conservative extension result does hold if we restrict to  equational theories.

Consider any  sirmonoid $\m{S}$. As before, we denote by $\altnot a$ the common result of $a \ld \ut$ and $\ut \rd a$ for $a \in S$, and obtain a nucleus $\alpha \colon S \to S; \ a \mapsto \daltnot a$ on $\pair{S, \preceq, \pd, \ut}$ and a residuated pomonoid  (see, e.g.,~\cite[Thm.~3.34(1)\hyp{}(3)]{GJKO07})
\[
\m{S}_{\daltnot}  = \pair{\alpha[S], \preceq,  \pd_{\daltnot}, \ld, \rd, \ut} \quad \text{where } a \pd_{\daltnot} b := \alpha(a \pd b).
\]
We also obtain the following analogue of Proposition~\ref{prop:alpha}.

\begin{proposition}\label{prop:alphasirmonoids}
Let $\m{S}$ be a sirmonoid. 
\begin{enumerate}[font=\upshape, label={(\alph*)}]
\item	The map $\alpha \colon \m S \to \m S_{\daltnot}$ is a surjective homomorphism. 
\item	 $\m{S}_{\daltnot}$ is a group.
\end{enumerate}
\end{proposition}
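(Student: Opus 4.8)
The plan is to follow the template of Proposition~\ref{prop:alpha}, transferring its argument from the lattice setting to the purely order\hyp{}theoretic setting while accounting for two differences: the absence of lattice operations, which removes the hardest step of the earlier proof, and the fact that $\SIRM$ is only a quasivariety, which complicates part (b). Throughout I use that $\m{S}$ is $\ut$\hyp{}cyclic; this is established exactly as in Proposition~\ref{prop:ucyclicmon}, whose proof invokes only residuation and reflexivity of $\preceq$ and so applies verbatim to sirmonoids. Hence $\altnot a = a\ld\ut = \ut\rd a$ is well defined and $\alpha(a)=\daltnot a$ is a nucleus on $\pair{S,\preceq,\pd,\ut}$, as already recorded before the statement.

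For part (a), surjectivity is immediate from idempotence of the nucleus, since $\alpha$ restricts to the identity on its image $\alpha[S]$. To see that $\alpha$ is a homomorphism onto $\m{S}_{\daltnot}$, I note that $\alpha(\ut)=\ut$ (by $\ut\ld\ut=\ut$) and that $\alpha$ preserves the monoid operation, i.e. $\alpha(a\pd b)=\alpha(a)\pd_{\daltnot}\alpha(b)$, by the general theory of nuclei on partially ordered monoids already invoked for the construction of $\m{S}_{\daltnot}$. The only genuine computation is preservation of the residuals, equivalently $\daltnot(a\ld b)=\daltnot a\ld\daltnot b$ and $\daltnot(a\rd b)=\daltnot a\rd\daltnot b$. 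These follow by composing the two sirmonoid analogues of Lemma~\ref{lem:alpha-is-hom}(i),(ii), namely $\altnot(a\ld b)=\altnot b\rd\altnot a$ and $\altnot(a\rd b)=\altnot b\ld\altnot a$. The key observation is that the half of the proof of Lemma~\ref{lem:alpha-is-hom}(i),(ii) that derives these identities in an integrally closed structure uses only residuation, reflexivity of $\preceq$, $\ut$\hyp{}cyclicity, and the fact that $\alpha$ is increasing --- never a meet or join --- and so carries over unchanged to any sirmonoid.

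For part (b), I would first argue that $\m{S}_{\daltnot}$ is a sirmonoid and then that it is in fact a group. Since $\m{S}_{\daltnot}$ is already known to be a residuated pomonoid, by the stated characterization of semi\hyp{}integrality it suffices to check that $\ut$ is a maximal element of $\pair{\alpha[S],\preceq}$; this is inherited directly from $\m{S}$, because the order on $\m{S}_{\daltnot}$ is the restriction of $\preceq$ and $\ut$ is maximal in $\m{S}$. Finally, for every $a\in S$ we have $\daltnot\alpha(a)=\alpha(\alpha(a))=\alpha(a)$, so $\m{S}_{\daltnot}$ satisfies $\daltnot x\eq x$, i.e. $(x\ld\ut)\ld\ut\eq x$, and is therefore a group.

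The main obstacle --- indeed the only point at which this proof departs substantively from that of Proposition~\ref{prop:alpha} --- is that $\SIRM$ is a quasivariety rather than a variety, so I cannot conclude that $\m{S}_{\daltnot}$ is a sirmonoid merely from its being a homomorphic image of one; the earlier proof of part (b) did exactly this, since $\ICRL$ is a variety. Part (b) must therefore establish sirmonoid\hyp{}hood directly, and the inheritance of the maximality of $\ut$ is the natural tool for doing so. At this step I would take care to confirm that the partial order carried by $\m{S}_{\daltnot}$ is genuinely the restriction of $\preceq$, so that maximality does transfer, rather than some other order induced by $\pd_{\daltnot}$.
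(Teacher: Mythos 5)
Your proof is correct, and its overall skeleton matches the paper's: the monoid part is delegated to the general theory of nuclei, the substantive step is showing that $\alpha$ preserves the residuals via the identities $\altnot(x \ld y) \eq \altnot y \rd \altnot x$ and $\altnot(y \rd x) \eq \altnot x \ld \altnot y$, and part (b) then follows from idempotence of $\alpha$. Where you differ is in how those two identities are obtained. You argue that the relevant half of the proof of Lemma~\ref{lem:alpha-is-hom}(i),(ii) transfers verbatim to sirmonoids because it never invokes a meet or join, only residuation with respect to $\preceq$, reflexivity (which gives $x \ld x \eq x \rd x \eq \ut$), antisymmetry at the final step, and $\ut$\hyp{}cyclicity; this transfer claim is in fact correct, though it asks the reader to re\hyp{}audit an earlier proof. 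The paper instead gives a self\hyp{}contained derivation inside $\SIRM$, first establishing the auxiliary identities $x \ld (x \rd y) \eq \altnot y$ and $(x \ld y) \rd y \eq \altnot x$ and then computing $\altnot(a \ld b) = ((a\ld b)\ld(\altnot a))\rd(\altnot a) = (\altnot b)\rd(\altnot a)$; this costs a short extra computation but avoids any appeal to the lattice setting and yields those auxiliary identities as a byproduct. For part (b) you are somewhat more explicit than the paper: since $\SIRM$ is only a quasivariety, you rightly note that sirmonoid\hyp{}hood of $\m{S}_{\daltnot}$ cannot be inferred purely from its being a homomorphic image, and you close the gap by observing that $\m{S}_{\daltnot}$ is already a residuated pomonoid carrying the restriction of $\preceq$ (this is exactly how $\m{S}_{\daltnot}$ is defined, so your final caveat is moot) in which $\ut$ remains maximal; the paper compresses this into the remark that $\alpha$ preserves the residuals. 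Both treatments are sound.
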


\begin{proof}
(a) Since $\alpha\colon S\to S$ is a nucleus on $\pair{S, \preceq, \pd, \ut}$ and $\alpha(\ut) = \ut$, it follows that $\alpha$ is a surjective monoid homomorphism between $\pair{S, \pd, \ut}$ and $\pair{\alpha[S], \pd_{\daltnot}, \ut}$. Now, given  $a,b\in S$, notice that $a(\altnot b)b \preceq a$ and therefore $\altnot b \preceq a\ld(a\rd b)$. That is, $(\altnot b)\ld (a\ld(a\rd b)) = \ut$. But also $\altnot b b (a\ld(a\rd b)) \preceq a\ld (a\rd b)$ and hence $b (a\ld(a\rd b)) \preceq (\altnot b)\ld (a\ld (a\rd b)) = \ut$, yielding $a\ld(a\rd b) \preceq b\ld\ut = \altnot b$. So $\m S$ satisfies $x\ld (x\rd y) \eq \altnot y$. Analogously, $\m S$ satisfies $(x\ld y)\rd y \eq \altnot x$ and hence for all $a,b \in S$,
\[
(\altnot b)\rd (\altnot a) = \big((a\ld b) \ld ((a\ld b)\rd b)\big) \rd (\altnot a) = \big((a\ld b) \ld (\altnot a)\big) \rd (\altnot a) = \altnot (a\ld b).
\]
That is, $\m S$ satisfies $\altnot (x\ld y) \eq (\altnot y)\rd (\altnot x)$ and, by a symmetric argument, also $\altnot (y\rd x) \eq (\altnot x)\ld (\altnot y)$. Hence
\[
\alpha(a\ld b) = \daltnot (a\ld b) = \altnot((\altnot b)\rd (\altnot a)) = (\daltnot a)\ld (\daltnot b) = \alpha(a)\ld \alpha(b).
\]
Analogously, $\alpha(b\rd a) = \alpha(b)\rd \alpha(a)$, so $\alpha$ is a sirmonoid homomorphism. 

(b) It follows from the fact that $\alpha$ preserves the residuals that $\m{S}_{\daltnot}$ is a sirmonoid. To prove that $\m{S}_{\daltnot}$  is a group, it suffices to show that it satisfies  $\daltnot x \eq x$. But $\alpha$ is idempotent and hence  $\daltnot \alpha(a) = \alpha(a)$ for every $a \in S$ as required.
\end{proof}

Recall that  $s \preceq t$  is valid in a group $\m{G}$ if and only if $s \eq t$ is valid in $\m{G}$. Moreover,  in every sirmonoid $\m{S}$ the map $a \mapsto \altnot a$ is both antitone, using residuation, and monotone, since $\m{S} \models \altnot (x\ld y) \eq \altnot y\rd \altnot x$. Hence, since  $\m{S} \models \altnot\daltnot x \eq \altnot x$, also $\altnot s \preceq \altnot t$ is valid in a sirmonoid $\m{S}$ if and only if  $\altnot s \eq \altnot t$ is valid in $\m{S}$. The proof of the following result now mirrors the proof of Lemma~\ref{lem:A_daltnot}.

\begin{lemma}
For any sirmonoid $\m{S}$ and residuated monoid terms $s,t$,
\[
\m{S}_{\daltnot} \models s \eq t \iff \m{S} \models  \daltnot s  \eq  \daltnot t.
\]
\end{lemma}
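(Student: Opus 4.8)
The plan is to mirror the proof of Lemma~\ref{lem:A_daltnot} exactly, replacing the residuated lattice $\m{A}$ and its $\ell$-group quotient $\m{A}_{\daltnot}$ with the sirmonoid $\m{S}$ and its group quotient $\m{S}_{\daltnot}$, and replacing inequations $s \leq t$ with equations $s \eq t$ throughout. The key facts I would invoke are Proposition~\ref{prop:alphasirmonoids}, which tells me that $\alpha \colon \m{S} \to \m{S}_{\daltnot}$ is a surjective homomorphism onto a group, together with the observations recorded just before the statement, namely that in a group $s \preceq t$ is equivalent to $s \eq t$, and that in any sirmonoid $\altnot s \eq \altnot t$ is equivalent to $\altnot s \preceq \altnot t$ (since $a \mapsto \altnot a$ is both antitone and monotone, and $\altnot \daltnot x \eq \altnot x$).

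For the forward direction, I would assume $\m{S} \models \daltnot s \eq \daltnot t$. Since $\m{S}_{\daltnot}$ is a homomorphic image of $\m{S}$ under $\alpha$, and using that $\alpha$ commutes with the term operations, the equation $\daltnot s \eq \daltnot t$ also holds in $\m{S}_{\daltnot}$; but in $\m{S}_{\daltnot}$ we have $\daltnot x \eq x$ (Proposition~\ref{prop:alphasirmonoids}(b)), so this collapses to $\m{S}_{\daltnot} \models s \eq t$.

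For the converse, I would argue contrapositively, just as in Lemma~\ref{lem:A_daltnot}. Suppose $\m{S} \not\models \daltnot s \eq \daltnot t$. Using the equivalence between $\altnot s \eq \altnot t$ and $\altnot s \preceq \altnot t$, the failure of the equation $\daltnot s \eq \daltnot t$ amounts to the failure of $\daltnot s \preceq \daltnot t$ (or symmetrically $\daltnot t \preceq \daltnot s$); so there is a homomorphism $\nu \colon \m{Tm} \to \m{S}$ with $\nu(\daltnot s) \not\preceq \nu(\daltnot t)$. Composing with $\alpha$ and using $\alpha(\nu(s)) = \daltnot \nu(s) = \nu(\daltnot s)$ (and likewise for $t$), I obtain a homomorphism $\alpha \circ \nu \colon \m{Tm} \to \m{S}_{\daltnot}$ witnessing $\alpha\circ\nu(s) \not\preceq \alpha\circ\nu(t)$, so $\m{S}_{\daltnot} \not\models s \preceq t$, and hence $\m{S}_{\daltnot} \not\models s \eq t$ since $\preceq$ and $\eq$ coincide in the group $\m{S}_{\daltnot}$.

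The only genuine wrinkle, and the step I would expect to require the most care, is the bookkeeping around the passage between equations and inequations in the sirmonoid setting: unlike the residuated-lattice case where one works directly with $\leq$, here the statement is phrased with $\eq$, so I must invoke the preparatory remark that $\altnot$ is simultaneously monotone and antitone to convert the equational (in)equality $\daltnot s \eq \daltnot t$ into an order statement $\daltnot s \preceq \daltnot t$ before extracting a separating homomorphism. Once that translation is in place the argument is otherwise a routine transcription of Lemma~\ref{lem:A_daltnot}.
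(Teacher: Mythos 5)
Your proof is correct and is exactly what the paper intends: the paper gives no explicit argument here, stating only that the proof ``mirrors the proof of Lemma~\ref{lem:A_daltnot}'' using the preparatory observations that $\preceq$ coincides with $\eq$ in groups and that $\altnot s\preceq\altnot t$ is equivalent to $\altnot s\eq\altnot t$ in sirmonoids, which is precisely the transcription you carry out. Your identification of the equation-versus-order bookkeeping as the only nontrivial step matches the role the paper assigns to those preparatory remarks.
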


Given any class $\vty{K}$ of sirmonoids, we let $\vty{K}_{\daltnot}$ denote the corresponding class of groups $\{\m{S}_{\daltnot} \mid \m{S}\in\vty{K}\}$. The proof of the following Glivenko\hyp{}style result  proceeds very similarly to the proof of Proposition~\ref{prop:varieties:integrally-closed:lgroups}  and is therefore omitted.

\begin{proposition}\label{prop:varieties:sirmonoids:groups}
Let $\vty{Q}$ be any quasivariety of sirmonoids defined relative to $\SIRM$ by a set of equations $E$. Then $\vty{Q}_{\daltnot}$ is a variety of groups defined relative to $\GRP$ by $E$, and for any residuated monoid terms $s,t$,
\[
\vty{Q}_{\daltnot} \models s \eq t \iff \vty{Q} \models \daltnot s \eq  \daltnot t.
\] 
\end{proposition}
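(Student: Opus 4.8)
The plan is to mirror the proof of Proposition~\ref{prop:varieties:integrally-closed:lgroups} almost verbatim, substituting the sirmonoid machinery for the residuated lattice machinery. Let $\vty{Q}$ be a quasivariety of sirmonoids defined relative to $\SIRM$ by a set of equations $E$, and let $\vty{W}$ be the variety of groups defined relative to $\GRP$ by the same set $E$. The goal is to show $\vty{Q}_{\daltnot} = \vty{W}$, from which the final biconditional then follows by the preceding lemma together with the fact that $s \preceq t$ is equivalent to $s \eq t$ in groups.

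First I would establish the inclusion $\vty{W} \subseteq \vty{Q}_{\daltnot}$. Since every group is term\hyp{}equivalent to a sirmonoid satisfying $\daltnot x \eq x$, we have $\vty{W} = \vty{W}_{\daltnot}$. Any $\m{G} \in \vty{W}$ is a group satisfying $E$, hence a sirmonoid in $\SIRM$ satisfying $E$, so $\m{G} \in \vty{Q}$; and because $\m{G}_{\daltnot} = \m{G}$, it follows that $\m{G} \in \vty{Q}_{\daltnot}$. Next I would establish the reverse inclusion $\vty{Q}_{\daltnot} \subseteq \vty{W}$. Take any $\m{S}_{\daltnot} \in \vty{Q}_{\daltnot}$ with $\m{S} \in \vty{Q}$. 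By Proposition~\ref{prop:alphasirmonoids}, $\m{S}_{\daltnot}$ is a group and a homomorphic image of $\m{S}$ under $\alpha$. Since $\m{S}$ satisfies every equation in $E$ and equations are preserved by homomorphic images, $\m{S}_{\daltnot}$ satisfies $E$ as well; being a group satisfying $E$, it lies in $\vty{W}$. Together these give $\vty{Q}_{\daltnot} = \vty{W}$, which in particular shows $\vty{Q}_{\daltnot}$ is a variety of groups axiomatized relative to $\GRP$ by $E$.

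For the displayed biconditional, I would invoke the lemma immediately preceding this proposition: for any sirmonoid $\m{S}$ and terms $s,t$, one has $\m{S}_{\daltnot} \models s \eq t$ iff $\m{S} \models \daltnot s \eq \daltnot t$. Quantifying over all $\m{S} \in \vty{Q}$ yields $\vty{Q}_{\daltnot} \models s \eq t$ iff $\vty{Q} \models \daltnot s \eq \daltnot t$, which is exactly the desired equivalence.

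I do not anticipate a genuine obstacle, since every ingredient has already been assembled: the representation of groups as sirmonoids with $\daltnot x \eq x$, the homomorphism property of $\alpha$ from Proposition~\ref{prop:alphasirmonoids}, and the termwise translation lemma. The only point requiring slight care is the direction $\vty{W} \subseteq \vty{Q}_{\daltnot}$, where one must remember that a group viewed as a sirmonoid equals its own $\daltnot$\hyp{}image, so that membership in $\vty{Q}$ automatically delivers membership in $\vty{Q}_{\daltnot}$; this is the step most easily overlooked, but it is exactly parallel to the observation $\vty{W} = \vty{W}_{\daltnot} \subseteq \vty{V}_{\daltnot}$ used in the residuated lattice case.
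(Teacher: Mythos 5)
Your proof is correct and follows exactly the route the paper intends: the paper omits this proof precisely because it ``proceeds very similarly'' to that of Proposition~\ref{prop:varieties:integrally-closed:lgroups}, and your argument reproduces that template faithfully, using Proposition~\ref{prop:alphasirmonoids} in place of Proposition~\ref{prop:alpha} and the preceding sirmonoid lemma in place of Lemma~\ref{lem:A_daltnot}. No gaps.
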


In particular, we obtain the following Glivenko\hyp{}style property for $\SIRM$  with respect to the variety of groups. 

\begin{corollary}\label{cor:Glivenko:SIRM:vs:GRP}
For any residuated monoid terms $s,t$,
\[
\GRP \models s \eq t \iff \SIRM \models \daltnot s  \eq  \daltnot t.
\] 
\end{corollary}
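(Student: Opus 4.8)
The plan is to obtain Corollary~\ref{cor:Glivenko:SIRM:vs:GRP} as the special case of Proposition~\ref{prop:varieties:sirmonoids:groups} in which the quasivariety $\vty{Q}$ is all of $\SIRM$. First I would observe that $\SIRM$ is trivially defined relative to itself by the empty set of equations $E = \varnothing$. Applying Proposition~\ref{prop:varieties:sirmonoids:groups} with this choice then yields both that $\SIRM_{\daltnot}$ is the variety of groups defined relative to $\GRP$ by $\varnothing$, i.e.\ $\SIRM_{\daltnot} = \GRP$, and the biconditional
\[
\SIRM_{\daltnot} \models s \eq t \iff \SIRM \models \daltnot s \eq \daltnot t.
\]
Substituting $\SIRM_{\daltnot} = \GRP$ into the left-hand side immediately produces the desired equivalence $\GRP \models s \eq t \iff \SIRM \models \daltnot s \eq \daltnot t$.

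The key steps, in order, are therefore: (1) identify $E = \varnothing$ as a defining set of equations for $\SIRM$ relative to $\SIRM$; (2) invoke Proposition~\ref{prop:varieties:sirmonoids:groups} to conclude $\SIRM_{\daltnot}$ is the subvariety of $\GRP$ axiomatized by $\varnothing$, which is $\GRP$ itself; and (3) read off the stated biconditional directly from that proposition. This mirrors exactly how Corollary~\ref{cor:Glivenko:MTRL:vs:LG} was obtained from Theorem~\ref{thm:Glivenko:varieties:integrally-closed} in the residuated-lattice setting, where taking $\vty{V} = \ICRL$ gave $\vty{V}_{\daltnot} = \LG$.

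I do not anticipate a serious obstacle, since the result is a direct instantiation; the only point requiring a moment's care is the verification that $\SIRM_{\daltnot}$ really equals the full variety $\GRP$ rather than a proper subvariety. This follows because Proposition~\ref{prop:varieties:sirmonoids:groups} with $E = \varnothing$ gives $\SIRM_{\daltnot}$ as the variety of groups satisfying no additional equations beyond those of $\GRP$, and conversely every group, viewed as a sirmonoid satisfying $\daltnot x \eq x$, is already of the form $\m{S}_{\daltnot}$ (namely $\m{S}_{\daltnot} = \m{S}$ when $\altnot$ is idempotent as the identity), so $\GRP \subseteq \SIRM_{\daltnot}$ as well. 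Hence $\SIRM_{\daltnot} = \GRP$, and the corollary follows.
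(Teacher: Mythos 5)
Your proposal is correct and matches the paper exactly: the corollary is stated there as an immediate instance of Proposition~\ref{prop:varieties:sirmonoids:groups} with $\vty{Q} = \SIRM$ and $E = \varnothing$, so that $\SIRM_{\daltnot} = \GRP$ and the biconditional is read off directly. Your extra check that $\GRP \subseteq \SIRM_{\daltnot}$ is a reasonable precaution but already subsumed by the proposition's assertion that $\vty{Q}_{\daltnot}$ \emph{is} the variety of groups axiomatized by $E$ relative to $\GRP$.
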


We use this result to prove that the equational theory of $\ICRL$ is a conservative extension of the equational theory of $\SIRM$.  We call a sequent $s_1,\ldots,s_n \seq t$  an {\em $m$\hyp{}sequent} if $s_1,\ldots,s_n, t$ are residuated monoid terms, and say that it is valid in a class $\vty{K}$ of sirmonoids, denoted $\models_{\vty{K}} s_1,\ldots,s_n \seq t$, if $\vty{K} \models s_1\cdots s_n \preceq t$,  recalling that the empty product is understood as $\ut$.  

\begin{proposition}
An $m$\hyp{}sequent is derivable in $\pc{IcRL}$ if and only if it is valid in all sirmonoids.
\end{proposition}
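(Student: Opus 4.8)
The plan is to split into the two implications, using throughout that by Proposition~\ref{prop:soundness-and-completeness-MnRL} an $m$\hyp{}sequent $s_1,\dots,s_n\seq t$ is derivable in $\pc{IcRL}$ if and only if $\ICRL\models s_1\cdots s_n\le t$. For the completeness direction, suppose the $m$\hyp{}sequent is valid in all sirmonoids. Given any $\m{A}\in\ICRL$, its $\{\pd,\ld,\rd,\ut\}$\hyp{}reduct is a sirmonoid by Lemma~\ref{lem:reductsirmonoid}, and its induced order $\preceq$ refines the lattice order, since $a\ld b=\ut$ entails $\ut\le a\ld b$ and hence $a\le b$. As $s_1\cdots s_n$ and $t$ are $m$\hyp{}terms, they are interpreted identically in $\m{A}$ and in its reduct, so $\SIRM\models s_1\cdots s_n\preceq t$ yields $\m{A}\models s_1\cdots s_n\le t$. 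Thus $\ICRL\models s_1\cdots s_n\le t$ and the sequent is derivable.

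For the soundness direction I would argue proof\hyp{}theoretically. Since $\pc{IcRL}$ admits cut\hyp{}elimination (Proposition~\ref{prop:cut-elimination}), any derivable $m$\hyp{}sequent has a cut\hyp{}free derivation, and by the subformula property every term occurring in it is a subterm of an $m$\hyp{}term, hence itself an $m$\hyp{}term; in particular the rules for $\mt$ and $\jn$ never occur. It therefore suffices to check that each remaining rule preserves validity in all sirmonoids, reading $\models_{\SIRM}\Ga\seq u$ as $\SIRM\models s_1\cdots s_k\preceq u$ for $\Ga=s_1,\dots,s_k$. The axioms $\idr$ and $\trr$ are valid by reflexivity of $\preceq$ and the convention that the empty product is $\ut$, and the rules $\tlr$, $\pdlr$, $\pdrr$, $\ldlr$, $\ldrr$, $\rdlr$, $\rdrr$ are routine, using only that $\pd$ is associative and monotone and that $\ld,\rd$ are its residuals for $\preceq$.

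The one rule requiring genuine work is $\lgpw$, which reduces to the claim that, writing $d$ for the product of the terms in $\De$, $\LG\models d\le\ut$ implies $\SIRM\models d\preceq\ut$. To prove this I would first upgrade the inequation to an equation: the order\hyp{}reverse of an $\ell$-group is again an $\ell$-group and $\ld,\rd$ are term\hyp{}definable from the group operations in any $\ell$-group, so $\LG\models d\le\ut$ forces $\LG\models d\eq\ut$; and since free groups admit total compatible orders, a group word is trivial exactly when it is $\le\ut$ in every $\ell$-group, so $\LG$ and $\GRP$ agree on $m$\hyp{}term equations and $\GRP\models d\eq\ut$. By Corollary~\ref{cor:Glivenko:SIRM:vs:GRP} this gives $\SIRM\models\daltnot d\eq\ut$, and since the nucleus $a\mapsto\daltnot a$ is increasing (Proposition~\ref{prop:alphasirmonoids}), $d\preceq\daltnot d=\ut$ in every sirmonoid. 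For the rule itself, from $\models_{\SIRM}\Ga,\PI\seq u$ and the just\hyp{}established $\SIRM\models d\preceq\ut$, monotonicity of $\pd$ gives $\models_{\SIRM}\Ga,\De,\PI\seq u$. I expect this claim, together with the decision to argue syntactically rather than algebraically, to be the crux: trying to obtain $\SIRM\models s_1\cdots s_n\preceq t$ directly from $\ICRL\models s_1\cdots s_n\le t$ by pushing $s_1\cdots s_n$ and $t$ through the group quotient $\alpha$ only yields $\daltnot((s_1\cdots s_n)\ld t)=\ut$, i.e.\ $(s_1\cdots s_n)\ld t\preceq\ut$, which is strictly weaker than $s_1\cdots s_n\preceq t$, so it is precisely the reading of a cut\hyp{}free derivation in an arbitrary sirmonoid that captures the extra content of $\ICRL$\hyp{}validity over $\LG$\hyp{}validity.
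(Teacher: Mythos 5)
Your proof is correct and follows essentially the same route as the paper: the right\hyp{}to\hyp{}left direction via Lemma~\ref{lem:reductsirmonoid} and Proposition~\ref{prop:soundness-and-completeness-MnRL}, and the left\hyp{}to\hyp{}right direction by checking that the non\hyp{}cut rules preserve validity in $\SIRM$, with the $\lgpw$ case handled exactly as in the paper (total orderability of the free group to get $\GRP \models d \eq \ut$, then Corollary~\ref{cor:Glivenko:SIRM:vs:GRP} and the fact that $d \preceq \daltnot d$). Your explicit appeal to cut\hyp{}elimination and the subformula property to confine attention to $m$\hyp{}sequents is a detail the paper leaves implicit, but it is the same argument.
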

\begin{proof}
For the right\hyp{}to\hyp{}left direction, suppose that an $m$\hyp{}sequent $\Ga \seq t$ is valid in all sirmonoids. By Lemma~\ref{lem:reductsirmonoid}, it is also valid in all integrally closed residuated lattices, and hence, by Proposition~\ref{prop:soundness-and-completeness-MnRL}, derivable in $\pc{IcRL}$.

For the left\hyp{}to\hyp{}right direction, it suffices to show that all the rules of $\pc{IcRL}$ apart from $\cutr$ preserve validity in $\SIRM$. For the key case of $\lgpw$, suppose that $\models_{\SIRM} \Ga, \PI \seq u$ and $\models_{\LG} \De \seq \ut$. Letting $s_1$, $s_2$, and  $t$ denote the products of the terms in $\Ga$, $\PI$, and $\De$, respectively, we obtain  $\SIRM \models s_1 s_2 \le u$ and $\LG \models t \leq \ut$. We claim that  $\GRP \models t \eq \ut$. Otherwise, since the free group on countably infinitely many generators can be totally ordered (see, e.g.,~\cite[Thm.~3.4]{CR16}), we would have an $\ell$-group in which $\ut < t$,  contradicting $\LG \models t \leq \ut$.  Hence, by Corollary~\ref{cor:Glivenko:SIRM:vs:GRP}, we obtain $\SIRM \models t \preceq \ut$. So $\SIRM \models s_1  t s_2 \preceq u$; that is, $\models_{\SIRM} \Ga, \De, \PI \seq u$. 
\end{proof}

\begin{theorem}\label{thm:eq-MTRL-cons-ext-SIRM}
The equational theory of integrally closed residuated lattices is a conservative extension of the equational theories of  sirmonoids and pseudo BCI\hyp{}algebras.
\end{theorem}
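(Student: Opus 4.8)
The plan is to prove the two conservativity statements separately, reducing both to inequalities that I can test with the sequent calculus $\pc{IcRL}$. For sirmonoids I want to show that for all residuated monoid terms $s,t$,
\[
\ICRL \models s \eq t \iff \SIRM \models s \eq t.
\]
The first move is to replace each equation by a conjunction of two inequalities. The lattice order $\le$ of any integrally closed residuated lattice and the induced order $\preceq$ of any sirmonoid are both reflexive and antisymmetric — antisymmetry of $\preceq$ being exactly the defining quasiequation~(vi) — so
\[
\ICRL \models s \eq t \iff \big(\ICRL \models s \le t \ \text{and}\ \ICRL \models t \le s\big),
\]
and similarly $\SIRM \models s \eq t$ holds iff both $\SIRM \models s \preceq t$ and $\SIRM \models t \preceq s$ hold.

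The key point is that each of these inequalities is an $m$\hyp{}sequent: $s \le t$ is recorded by the sequent $s \seq t$, whose validity in $\ICRL$ is $\models_{\ICRL} s \seq t$ and whose validity in all sirmonoids is $\models_{\SIRM} s \seq t$. By the preceding proposition, $\models_{\SIRM} s \seq t$ holds precisely when $s \seq t$ is derivable in $\pc{IcRL}$, and by Proposition~\ref{prop:soundness-and-completeness-MnRL} this holds precisely when $\models_{\ICRL} s \seq t$. Hence $\SIRM \models s \preceq t \iff \ICRL \models s \le t$ for all residuated monoid terms $s,t$. Applying this to both $s \seq t$ and $t \seq s$ and recombining via the antisymmetry reductions above yields $\ICRL \models s \eq t \iff \SIRM \models s \eq t$, which is the conservativity assertion for sirmonoids.

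For pseudo BCI\hyp{}algebras, whose signature $\{\ld,\rd,\ut\}$ drops the product, I would reduce to the sirmonoid case. Terms over $\{\ld,\rd,\ut\}$ are in particular residuated monoid terms, so the equivalence just established already gives $\ICRL \models s \eq t \iff \SIRM \models s \eq t$ for all such $s,t$; it remains to prove $\SIRM \models s \eq t \iff \PBCI \models s \eq t$, writing $\PBCI$ for the class of pseudo BCI\hyp{}algebras. The forward direction is immediate, since the $\{\ld,\rd,\ut\}$\hyp{}reduct of every sirmonoid is a pseudo BCI\hyp{}algebra. The converse uses that every pseudo BCI\hyp{}algebra embeds as a $\{\ld,\rd,\ut\}$\hyp{}subreduct of some sirmonoid~\cite[Thm.~3.3]{EK18}; as equations are preserved under subalgebras and reducts, any equation valid in all sirmonoids descends to all pseudo BCI\hyp{}algebras. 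Composing the two equivalences delivers the theorem.

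The substantive work is already encapsulated in the preceding proposition, which in turn rests on the Glivenko\hyp{}style Corollary~\ref{cor:Glivenko:SIRM:vs:GRP} and cut\hyp{}elimination for $\pc{IcRL}$; granting these, the proof above is essentially bookkeeping. The one delicate step to flag is the passage from an equation to a pair of inequality $m$\hyp{}sequents: this is legitimate only because $\preceq$ is a genuine partial order on every sirmonoid, which is precisely what the antisymmetry quasiequation~(vi) guarantees, and it is what allows a single equation to be certified by two separate sequents to which the sandwiching between $\SIRM$\hyp{}validity and $\ICRL$\hyp{}validity applies.
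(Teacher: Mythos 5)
Your argument is correct and matches the paper's own (implicit) proof: the theorem is stated there without a separate proof precisely because it follows from the preceding $m$\hyp{}sequent proposition together with Proposition~\ref{prop:soundness-and-completeness-MnRL} and the fact from~\cite[Thm.~3.3]{EK18} that every pseudo BCI\hyp{}algebra is a subreduct of a sirmonoid, which is exactly the bookkeeping you carry out (splitting each equation into two inequalities via antisymmetry of $\preceq$ and sandwiching both through derivability in $\pc{IcRL}$). The only blemish is cosmetic: in the pseudo BCI\hyp{}algebra step you have swapped the labels ``forward'' and ``converse'' --- the reduct observation gives $\PBCI \models s \eq t \implies \SIRM \models s \eq t$, while the embedding theorem gives the reverse implication --- but both implications appear with the correct justifications, so nothing is missing.
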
  

By the previous result, the sequent calculus consisting of the rules of $\pc{IcRL}$ restricted to $m$\hyp{}sequents and omitting the rules for $\mt$ and $\jn$ is sound and complete for the variety of sirmonoids and admits cut\hyp{}elimination. Similarly, if we further remove the rules for $\pd$, we obtain a sound and complete calculus for the variety of pseudo BCI\hyp{}algebras that admits cut\hyp{}elimination.

\begin{corollary}
The equational theories of sirmonoids and pseudo BCI\hyp{}algebras are decidable. 
\end{corollary}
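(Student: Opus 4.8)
The final statement is the corollary asserting that the equational theories of sirmonoids and pseudo BCI-algebras are decidable.

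The plan is to derive decidability directly from the cut-elimination result for the restricted calculus described in the paragraph immediately preceding the corollary. Recall that by the preceding proposition and its surrounding remarks, the sequent calculus obtained from $\pc{IcRL}$ by restricting to $m$-sequents and deleting the rules for $\mt$ and $\jn$ is sound and complete for $\SIRM$ and admits cut-elimination; deleting additionally the rules for $\pd$ yields a sound, complete, cut-free calculus for pseudo BCI-algebras. So for each of the two classes I would fix the corresponding restricted calculus, call it $\pc{C}$, and argue that validity of an $m$-sequent (equivalently, by the identification of equations with inequations $s \preceq \ut$ in the presence of the defining quasiequation, of an equation) is decidable by searching for a cut-free derivation in $\pc{C}$.

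First I would note that it suffices to decide, for a given $m$-sequent $\Ga \seq t$, whether it has a cut-free derivation in $\pc{C}$, since by soundness and completeness this is equivalent to $\models_{\SIRM} \Ga \seq t$ (respectively validity in pseudo BCI-algebras); and validity of an equation $s \eq t$ reduces to validity of the two $m$-sequents witnessing $s \preceq t$ and $t \preceq s$. Next I would observe that the cut-free rules of $\pc{C}$ all enjoy the subformula property: in each rule the terms occurring in the premises are subterms of terms occurring in the conclusion, and no rule increases term complexity as one reads upward. Consequently, in any cut-free derivation of $\Ga \seq t$, every sequent is built from subterms of the finitely many terms in $\Ga, t$, and the length of each sequent is bounded by a function of the size of the endsequent. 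This makes the space of candidate sequents appearing in a cut-free proof finite, so backward proof search over $\pc{C}$ terminates.

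The only rule requiring genuine care is $\lgpw$, whose side-condition is $\models_{\LG} \De \seq \ut$ (or $\models_{\GRP}$ in the group-based reading). The main obstacle is therefore ensuring this side-condition is itself decidable, so that each backward step can be effectively checked. This is handled by recalling that the equational theory of $\ell$-groups is decidable --- indeed coNP-complete by~\cite{GM16} --- so checking $\models_{\LG} \De \seq \ut$ is an effective, terminating subroutine; for pseudo BCI-algebras the $\pd$ rules are absent but the $\lgpw$ side-condition is handled identically. Putting these together, backward proof search in $\pc{C}$ with the decidable $\LG$-oracle for $\lgpw$ terminates and correctly decides $m$-sequent validity, and hence the equational theories of $\SIRM$ and of pseudo BCI-algebras are decidable. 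I would expect this to follow the same non-deterministic space-bounded reasoning used in the proof of Theorem~\ref{thm:integrally-closed-decidable}, so the argument is essentially a transfer of that proof to the restricted calculi, with the subterm/complexity bounds guaranteeing a finite search space.
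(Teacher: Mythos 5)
Your proof is correct and follows essentially the route the paper intends: the corollary is stated without proof immediately after the remark that the restricted calculi are sound, complete, and cut-free, and decidability then follows by the same proof-search argument as in Theorem~\ref{thm:integrally-closed-decidable}. (An even shorter derivation, also available from the paper's layout, is to combine the conservativity result of Theorem~\ref{thm:eq-MTRL-cons-ext-SIRM} directly with the decidability of the equational theory of $\ICRL$ from Theorem~\ref{thm:integrally-closed-decidable}.)
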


Similar results hold for {\em BCI\hyp{}algebras}~\cite{Ise66}, axiomatized relative to pseudo BCI\hyp{}algebras by the equation $x\ld y \eq y\rd x$, and {\em sircomonoids}~\cite{RvA00}, axiomatized relative to sirmonoids by $x\ld y \eq y\rd x$ or $x \pd y \eq y \pd x$. In particular, the equational theory of commutative integrally closed residuated lattices  is a conservative extension of the equational theories of sircomonoids and BCI\hyp{}algebras. Let us remark also that the decidability of the equational theory of BCI\hyp{}algebras was proved using a sequent calculus with the restricted version of $\lgpw$ in~\cite{KK92}.


\section{Casari's Comparative Logic}\label{Sec:L-Pregroups}

\begin{figure}[t!] 
\centering\small
\fbox{
\begin{minipage}{11.5 cm}
\[
\begin{array}{lcl}
\mbox{Identity Axioms} & & \mbox{Cut Rule}\\[.1in]
\infer[\pfa{\idr}]{s \seq s}{} & & \infer[\pfa{\cutr}]{\Ga_1, \Ga_2, \Ga_3 \seq \De_1,\De_2}{\Ga_2 \seq s,\De_1 & \Ga_1, s, \Ga_3 \seq \De_2}\\[.2in]
\mbox{Structural Rules} & &\\[.1in]
\infer[\pfa{\elr}]{\Ga_1, \Pi_1, \Pi_2, \Ga_2 \seq \De}{\Ga_1,\Pi_2,\Pi_1,\Ga_2 \seq \De} & & 
\infer[\pfa{\err}]{\Ga \seq \De_1,\Sigma_1,\Sigma_2,\De_2}{\Ga \seq \De_1,\Sigma_2,\Sigma_1,\De_2}\\[.2in]
\mbox{Left Operation Rules} & & \mbox{Right Operation Rules}\\[.1in]
\infer[\pfa{\tlr}]{\Ga_1, \ut, \Ga_2 \seq \De}{\Ga_1, \Ga_2 \seq \De} &  & \infer[\pfa{\trr}]{\seq\ut}{}\\[.15in]
\infer[\pfa{\flr}]{\zr \seq}{} &  & \infer[\pfa{\frr}]{\Ga\seq\De_1,\zr,\De_2}{\Ga \seq \De_1,\De_2}\\[.15in]
\infer[\pfa{\ilr}]{\Ga_1, s \to t, \Ga_2, \Ga_3 \seq \De_1, \De_2}{\Ga_2 \seq s,\De_2 & \Ga_1, t, \Ga_3 \seq \De_1} &  & 
\infer[\pfa{\irr}]{\Ga \seq s \to t, \De}{\Ga, s \seq t, \De}\\[.15in]
\infer[\pfa{\pdlr}]{\Ga_1,  s \pd t, \Ga_2 \seq \De}{\Ga_1,s,t, \Ga_2 \seq \De} &  &
\infer[\pfa{\pdrr}]{\Ga_1, \Ga_2 \seq s \pd t, \De_1,\De_2}{\Ga_1 \seq s, \De_1 & \Ga_2 \seq t, \De_2}\\[.15in]
\infer[\pfa{\alr_1}]{\Ga_1, s \mt t, \Ga_2 \seq \De}{\Ga_1, s, \Ga_2 \seq \De} & & 
\infer[\pfa{\orr_1}]{\Ga \seq \De_1,s \jn t,\De_2}{\Ga \seq \De_1,s,\De_2}\\[.15in]
\infer[\pfa{\alr_2}]{\Ga_1, s \mt t, \Ga_2 \seq \De}{\Ga_1, t, \Ga_2 \seq \De} & & 
\infer[\pfa{\orr_2}]{\Ga \seq \De_1,s \jn t,\De_2}{\Ga \seq \De_1,t,\De_2}\\[.15in]
\infer[\pfa{\olr}]{\Ga_1, s \jn t, \Ga_2 \seq \De}{\Ga_1, s, \Ga_2 \seq u & \Ga_1, t, \Ga_2 \seq \De}
 &  & \infer[\pfa{\arr}]{\Ga \seq \De_1,s \mt t,\De_2}{\Ga \seq \De_1,s,\De_2 & \Ga \seq \De_1,t,\De_2}
 \end{array}
\]
\caption{The Sequent Calculus $\pc{InCPRL}$}
\label{fig:InCPRL}
\end{minipage}}
\end{figure}

The results of the previous sections extend with only minor modifications to the setting of {\em pointed residuated lattices} (also known as {\em FL\hyp{}algebras}), consisting of residuated lattices with an extra constant operation $\zr$. As before, we call such an algebra {\em integrally closed} if it satisfies $x \ld x \eq \ut$ and $x \rd x \eq \ut$. It is then straightforward to prove analogues of Lemma~\ref{lem:bounded-MnRL-are-integral} and Proposition~\ref{prop:ucyclicmon}, simply adding ``pointed'' before every occurrence of ``residuated lattice".

An $\ell$-group can be identified with an integrally closed pointed residuated lattice satisfying $(x \ld \ut) \ld \ut \eq x$ and $\zr \eq \ut$. However, to show that $\alpha \colon \m{A} \to \m{A}; \ a \mapsto \daltnot a$ on an integrally closed pointed residuated lattice $\m{A}$ defines a homomorphism onto an $\ell$-group $\m{A}_{\daltnot}  = \pair{\alpha[A],  \mt, \jn_{\daltnot}, \pd_{\daltnot}, \ld, \rd, \ut, \alpha(\zr)}$, we need also $\alpha(\zr) = \ut$. Assuming this  condition, we obtain analogues of Propositions~\ref{prop:alpha} and~\ref{prop:varieties:integrally-closed:lgroups}, and Theorem~\ref{thm:Glivenko:varieties:integrally-closed}  for integrally closed pointed residuated lattices satisfying $\zr \ld \ut \eq \ut$.\footnote{Note, however, that our definition of the Glivenko property for pointed residuated lattices now diverges from the definition of~\cite{GO06}, which considers the operations $a \mapsto \zr \rd (a \ld \zr)$ and $a \mapsto (\zr \rd a) \ld \zr$.}

Let us turn our attention now to a particular class of algebras  introduced by Casari in~\cite{Cas89} (see also~\cite{Cas87,Cas97,Pao00a,Met06}) to model comparative reasoning in natural language. For any commutative pointed residuated lattice $\m{A}$, we write $a \to b$ for the common result of $a \ld b$ and $b \rd a$; we also define $\lnot a := a \to \zr$ and $a + b := \lnot a \to b$ and say that $\m{A}$ is {\em involutive} if it satisfies $\lnot \lnot x \eq x$. We call an involutive commutative integrally closed pointed residuated lattice satisfying $\zr \to \ut \eq \ut$ (or equivalently $\zr\pd\zr \eq \zr$) a {\em Casari algebra} (called a {\em lattice\hyp{}ordered pregroup} in~\cite{Cas89}). We denote the variety of Casari algebras by $\CA$ and the variety of abelian $\ell$-groups (Casari algebras satisfying $\zr \eq \ut$) by $\AbLG$. The reasoning described above yields the following Glivenko\hyp{}style property for Casari algebras, first established in~\cite{Met06}.

\begin{proposition}[{\cite[Prop.~1]{Met06}}] For any pointed residuated lattice terms s,t,
\[
\AbLG \models s \le t \iff \CA \models \lnot\lnot s \le \lnot\lnot t. 
\]
\end{proposition}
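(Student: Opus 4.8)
The plan is to mirror the development already carried out for $\ICRL$ and $\LG$, transferring it to the pointed, involutive, commutative setting. The statement asserts a Glivenko-style correspondence between the variety $\CA$ of Casari algebras and the variety $\AbLG$ of abelian $\ell$-groups, expressed through the double-negation map $a \mapsto \lnot\lnot a$. Since in a commutative involutive pointed residuated lattice the two residuals coincide and $\lnot a = a \to \zr$, the natural candidate for the nucleus is $\beta \colon a \mapsto \lnot\lnot a$ rather than $\alpha \colon a \mapsto \daltnot a$. First I would verify that $\beta$ is a nucleus on the induced pomonoid and that its image $\m{A}_{\lnot\lnot}$ can be equipped with a residuated lattice structure, exactly as recalled in the excerpt for $\alpha$. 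The key additional ingredient, flagged in the paragraph immediately preceding the statement, is the condition $\zr \to \ut \eq \ut$ built into the definition of a Casari algebra (equivalently $\zr\pd\zr\eq\zr$), which guarantees $\beta(\zr)=\ut$ and hence that $\m{A}_{\lnot\lnot}$ is genuinely an $\ell$-group with designated constant equal to $\ut$.

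The main steps, in order, would be: (1) establish the analogue of Proposition~\ref{prop:alpha}, showing $\beta \colon \m{A}\to\m{A}_{\lnot\lnot}$ is a surjective homomorphism onto an $\ell$-group, invoking the commutative, involutive versions of Lemma~\ref{lem:alpha-is-hom}; here involutivity makes the argument cleaner, since $\lnot\lnot x \eq x$ collapses several of the residual computations. (2) Prove the analogue of Lemma~\ref{lem:A_daltnot}: for pointed residuated lattice terms $s,t$,
\[
\m{A}_{\lnot\lnot} \models s \le t \iff \m{A} \models \lnot\lnot s \le \lnot\lnot t,
\]
using that $\m{A}_{\lnot\lnot}$ is a homomorphic image of $\m{A}$ and is an abelian $\ell$-group (so satisfies $\lnot\lnot x \eq x$), together with the fact that any counterexample valuation into $\m{A}$ composes with $\beta$ to give one into $\m{A}_{\lnot\lnot}$. (3) Assemble the Glivenko correspondence exactly as in Theorem~\ref{thm:Glivenko:varieties:integrally-closed}: for the forward direction, if $\AbLG \models s\le t$ then since each $\m{A}_{\lnot\lnot}$ is an abelian $\ell$-group we get $\m{A}_{\lnot\lnot}\models s\le t$, whence $\m{A}\models \lnot\lnot s\le\lnot\lnot t$ by step~(2); for the converse, use that $\AbLG\subseteq\CA$ (identifying abelian $\ell$-groups as Casari algebras with $\zr\eq\ut$) and that on such algebras $\lnot\lnot x \eq x$.

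The hard part will be confirming that $\beta$ is genuinely a nucleus whose image is closed under the operations and is an $\ell$-group, because the presence of the constant $\zr$ and the switch from $\altnot$ to $\lnot$ means the identities underpinning Lemma~\ref{lem:alpha-is-hom} must be re-derived in the involutive commutative signature rather than quoted verbatim. In particular, one must check that $\beta$ preserves meets and the residual/implication operation, and that the condition $\zr\to\ut\eq\ut$ really does force $\beta(\zr)=\ut$ so that the designated constant of $\m{A}_{\lnot\lnot}$ is $\ut$ and the structure is an $\ell$-group and not merely an integrally closed pointed residuated lattice. Once these preservation properties are in place, everything else is a routine transcription of the earlier arguments; indeed the excerpt explicitly signals that the analogues of Propositions~\ref{prop:alpha} and~\ref{prop:varieties:integrally-closed:lgroups} and Theorem~\ref{thm:Glivenko:varieties:integrally-closed} have already been arranged for pointed residuated lattices satisfying $\zr\ld\ut\eq\ut$, so the proposition follows by specializing to the involutive commutative case and reading $\lnot\lnot$ for $\daltnot$.
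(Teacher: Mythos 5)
Your overall plan --- transfer the $\alpha$-machinery of Section~2 to the pointed, commutative, involutive setting and then argue as in Theorem~\ref{thm:Glivenko:varieties:integrally-closed} --- has the right shape, and it is essentially the route the paper takes (the proposition is quoted from~\cite{Met06}, and the paragraph preceding it sets up precisely the pointed analogues of Propositions~\ref{prop:alpha} and~\ref{prop:varieties:integrally-closed:lgroups} and Theorem~\ref{thm:Glivenko:varieties:integrally-closed} that are needed). But there is a genuine gap at your very first step: you propose to run the nucleus construction on $\beta \colon a \mapsto \lnot\lnot a$, where $\lnot a = a \to \zr$. A Casari algebra is by definition involutive, i.e., satisfies $\lnot\lnot x \eq x$, so $\beta$ is the identity map; its image is all of $\m{A}$, which is an abelian $\ell$-group only when $\zr = \ut$. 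In particular, your claim that the axiom $\zr \to \ut \eq \ut$ forces $\beta(\zr) = \ut$ is false: $\beta(\zr) = \lnot\lnot\zr = \zr$, and $\zr < \ut$ in, for instance, the two-element Boolean algebra, which is a Casari algebra. The verification you flag as ``the hard part'' is therefore not merely hard but impossible as stated.

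What the paper actually uses is the map $\alpha \colon a \mapsto \daltnot a$ with $\altnot a = a \ld \ut = a \to \ut$, i.e., negation taken at the unit $\ut$ rather than at $\zr$. For this map the Casari axiom does exactly what you want: $\zr \to \ut \eq \ut$ gives $\altnot \zr = \ut$ and hence $\alpha(\zr) = \altnot\ut = \ut$, so $\m{A}_{\daltnot}$ is a pointed $\ell$-group with $\zr$ interpreted as $\ut$, and the pointed analogue of Theorem~\ref{thm:Glivenko:varieties:integrally-closed} yields $\AbLG \models s \le t$ iff $\CA \models \daltnot s \le \daltnot t$. Your steps (2) and (3) then go through verbatim with $\daltnot$ in place of $\lnot\lnot$. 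Relating that $\daltnot$-formulation to the $\lnot\lnot$-formulation quoted from~\cite{Met06} is a further point your argument does not address; in any case, the nucleus construction itself must be carried out with $\daltnot$, since on an involutive algebra $\lnot\lnot$ carries no information.
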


A sequent calculus for Casari algebras was defined in~\cite{Met06}. We consider here {\em multiple\hyp{}conclusion sequents} defined as expressions of the form $\Ga \seq \De$ where $\Ga$ and $\De$ are finite (possibly empty) sequences of pointed residuated lattice terms. Generalizing our  definition for single\hyp{}conclusion sequents, we say that a multiple\hyp{}conclusion  sequent $s_1,\ldots,s_n \seq t_1,\ldots,t_m$ is {\em valid} in a class $\vty{K}$ of pointed residuated lattices, denoted by $\models_{\vty{K}} s_1,\ldots,s_n \seq t_1,\ldots,t_m$, if $\vty{K} \models s_1\cdots s_n \le t_1 + \cdots + t_m$, where the empty product is understood as $\ut$ and the empty sum as $\zr$.
 
The multiple\hyp{}conclusion sequent calculus $\pc{CA}$ consists of the calculus $\pc{InCPRL}$ for involutive commutative pointed residuated lattices defined in Figure~\ref{fig:InCPRL} extended with the rule
\[
\infer[\pfa{\ablgpw}]{\Ga_1, \Ga_2 \seq \De_1, \De_2}{\Ga_1 \seq \De_1 & \models_{\AbLG} \Ga_2 \seq \De_2}.
\]
The next proposition collects some results from~\cite{Met06}, noting that these can also be easily established using the methods of the previous sections.

\begin{proposition}[{\cite[Thms.~3,~4,~and~7]{Met06}}]\
\begin{enumerate}[font=\upshape, label={(\alph*)}]
\item	A multiple\hyp{}conclusion sequent is derivable in $\pc{CA}$ if and only if it is valid in $\CA$.
\item	$\pc{CA}$ admits cut\hyp{}elimination.
\item	The equational theory of Casari algebras is decidable.
\end{enumerate}
\end{proposition}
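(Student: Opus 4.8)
The three parts will be established exactly along the lines of Section~\ref{Sec:proof-theory-and-decidability}, with $\pc{CA}$, $\ablgpw$, and $\AbLG$ playing the roles that $\pc{IcRL}$, $\lgpw$, and $\LG$ played there, the only genuine novelty being the bookkeeping forced by the multiple\hyp{}conclusion, commutative, involutive setting. The plan is to prove (a), then to derive (b) by adapting the cut\hyp{}elimination induction, and finally to read off (c) from (b).

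For (a), the right\hyp{}to\hyp{}left (completeness) direction is a Lindenbaum\hyp{}Tarski construction as in Proposition~\ref{prop:soundness-and-completeness-MnRL}: the relation identifying terms $u$ and $v$ exactly when both $u \seq v$ and $v \seq u$ are derivable in $\pc{CA}$ is a congruence on the term algebra, and the quotient is a Casari algebra --- commutative by $\elr$ and $\err$, involutive by the rules $\flr$ and $\frr$ for $\zr$, integrally closed since $x \ld x \seq \ut$ and $x \rd x \seq \ut$ are derivable, and satisfying $\zr \to \ut \eq \ut$ --- in which $\Ga \seq \De$ is derivable precisely when the product of $\Ga$ lies below the $+$\hyp{}sum of $\De$. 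For the left\hyp{}to\hyp{}right (soundness) direction it suffices to check that each rule preserves validity in $\CA$; the rules of $\pc{InCPRL}$ are routine, and the one substantial case is $\ablgpw$. Writing $p,r$ for the products of $\Ga_1,\Ga_2$ and $q,s$ for the $+$\hyp{}sums of $\De_1,\De_2$, we have $\CA \models p \le q$ and $\AbLG \models r \le s$; the Glivenko\hyp{}style property recorded above (\cite[Prop.~1]{Met06}) transfers the latter into the corresponding statement in $\CA$, and monotonicity of $\pd$ and of $+$ then yields the conclusion, exactly as the soundness of $\lgpw$ was obtained from Corollary~\ref{cor:Glivenko:MTRL:vs:LG}.

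For (b), I would run the induction of Proposition~\ref{prop:cut-elimination} on the lexicographically ordered pair consisting of the complexity of the cut formula and the sum of the heights of the two subderivations. All cases in which the last rule of a premise belongs to $\pc{InCPRL}$ are handled as in the standard multiple\hyp{}conclusion argument, so the new work concerns applications of $\ablgpw$. When the cut formula lies outside the side\hyp{}condition part of an $\ablgpw$ inference, the cut is permuted above $\ablgpw$ and discharged by the induction hypothesis. The principal case is when the cut formula is absorbed into the $\AbLG$\hyp{}side\hyp{}condition sequent; here I use that the other premise of the cut is cut\hyp{}free derivable, hence valid in $\CA$ by part~(a), hence valid in $\AbLG$ since $\AbLG \subseteq \CA$, and I substitute it into the side\hyp{}condition sequent, whose $\AbLG$\hyp{}validity is thereby preserved, before reapplying $\ablgpw$. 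I expect this principal case to be the main obstacle: because the multiple\hyp{}conclusion cut rule simultaneously splits the succedents $\De_1,\De_2$ and the antecedents, the cut formula can be swallowed by the side condition on either its antecedent or its succedent side, and each sub\hyp{}case must be checked separately to confirm that the modified side sequent remains valid in $\AbLG$.

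Finally, (c) follows from (b) by the argument of Theorem~\ref{thm:integrally-closed-decidable}: by Savitch's theorem~\cite{Sav70} it suffices to exhibit a nondeterministic \textsc{pspace} procedure that guesses and verifies a cut\hyp{}free $\pc{CA}$ derivation branch by branch, storing only the branch from the root to the current point; the side condition $\models_{\AbLG}\Ga_2 \seq \De_2$ is checkable within these bounds because the equational theory of $\AbLG$ is coNP\hyp{}complete~\cite{GM16} and hence in \textsc{pspace}.
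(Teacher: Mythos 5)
The paper offers no proof of this proposition: it is imported wholesale from \cite{Met06} (Theorems 3, 4, and 7), accompanied only by the remark that the results ``can also be easily established using the methods of the previous sections.'' Your proposal is a correct and faithful execution of precisely that remark --- part (a) mirrors Proposition~\ref{prop:soundness-and-completeness-MnRL}, part (b) mirrors Proposition~\ref{prop:cut-elimination} with $\ablgpw$ in place of $\lgpw$, and part (c) mirrors Theorem~\ref{thm:integrally-closed-decidable} --- so in spirit it is the route the authors themselves indicate, just written out (whether it coincides with the original proofs in \cite{Met06} cannot be judged from the present text). Two points of detail are worth tightening. First, in the soundness of $\ablgpw$, the passage from $\models_{\AbLG}\Ga_2\seq\De_2$ to a fact usable in $\CA$ is not bare ``monotonicity of $+$'': one should rewrite the $\AbLG$-validity as $r\pd\lnot s\le\ut$ (using $\zr\eq\ut$ in $\AbLG$), transfer it to $\CA$ via the $t\le\ut$ form of the Glivenko property (the analogue of Corollary~\ref{cor:Glivenko:MTRL:vs:LG}, obtained as in Proposition~\ref{prop:Glivenko-for-inequations-s-leq-ut}), and then use the involutive identity $q+s=\lnot s\to q$ to conclude $pr\le q+s$ from $pr\pd\lnot s\le q$. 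Second, in the principal cut case the left premise is the multiple-conclusion sequent $\Ga_2\seq s,\De_1$, so when its $\AbLG$-validity is substituted into the side-condition sequent the passive succedent $\De_1$ must be carried along into the conclusion of the reconstructed $\ablgpw$ inference; you flag this bookkeeping but it is the one place where the transposition from the single-conclusion argument genuinely requires checking. Neither point is a gap in the plan.
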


We are now able to establish the main result of this section.

\begin{theorem}\label{thm:lpregroupsconservative}
The equational theory of  Casari algebras is a conservative extension of the equational theories of commutative integrally closed residuated lattices, sircomonoids, and BCI\hyp{}algebras. 
\end{theorem}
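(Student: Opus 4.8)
The plan is to establish the three conservativity claims by leveraging the Glivenko-style correspondence between Casari algebras and abelian $\ell$-groups (the cited Proposition from~\cite{Met06}), together with the analogous results already proved for commutative integrally closed residuated lattices and the variety $\AbLG$ of abelian $\ell$-groups. The key observation is that all three target theories (commutative integrally closed residuated lattices, sircomonoids, and BCI-algebras) are tied to the \emph{same} variety $\AbLG$ through their respective Glivenko properties: the commutative analogue of Corollary~\ref{cor:Glivenko:MTRL:vs:LG} relates commutative $\ICRL$ to $\AbLG$, while the commutative analogue of Corollary~\ref{cor:Glivenko:SIRM:vs:GRP} relates sircomonoids (and BCI-algebras, by the results of Section~\ref{Sec:BCIetc}) to abelian groups, whose equational theory coincides with that of $\AbLG$ by the totally-orderable-free-group argument used in the proof preceding Theorem~\ref{thm:eq-MTRL-cons-ext-SIRM}.

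First I would treat the case of commutative integrally closed residuated lattices. Given residuated lattice terms $s, t$ in the signature of commutative integrally closed residuated lattices, I want to show $\CA \models s \eq t$ if and only if the corresponding equation holds in the variety of commutative integrally closed residuated lattices. The forward direction is immediate since every commutative integrally closed residuated lattice, when $\zr$ is set appropriately, or rather its reduct, sits inside the Casari framework; the substance is the reverse (conservativity) direction. Here I would use that both varieties admit the Glivenko property with respect to $\AbLG$: an equation $s \eq t$ reduces via the double-negation map to a statement about $\lnot\lnot s \leq \lnot\lnot t$ and symmetrically, and by the cited Proposition this is controlled by $\AbLG \models s \eq t$. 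Since the commutative analogue of Theorem~\ref{thm:Glivenko:varieties:integrally-closed} gives the matching correspondence for commutative integrally closed residuated lattices with respect to $\AbLG$, the two equational theories agree on the common signature.

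Next I would dispatch sircomonoids and BCI-algebras together. By the results at the end of Section~\ref{Sec:BCIetc}, the equational theory of commutative integrally closed residuated lattices is a conservative extension of the equational theories of sircomonoids and BCI-algebras. Composing this with the first case yields the claim for these two classes, since conservative extension is transitive: if the equational theory of $\CA$ conservatively extends that of commutative integrally closed residuated lattices, and the latter conservatively extends those of sircomonoids and BCI-algebras, then $\CA$ conservatively extends all three.

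The main obstacle I anticipate is handling the constant $\zr$ correctly across the translations. The footnote flags that the Glivenko property for pointed residuated lattices diverges from~\cite{GO06}, using $\lnot\lnot$ built from $\zr$ rather than the nucleus $\daltnot$ from the unpointed development, and one must verify that the condition $\alpha(\zr) = \ut$ (equivalently $\zr \to \ut \eq \ut$, which is part of the definition of a Casari algebra) is exactly what makes the pointed nucleus $\alpha$ a homomorphism onto an abelian $\ell$-group, so that $\lnot\lnot$ and $\daltnot$ coincide on the relevant terms. Care is also needed because sircomonoids and BCI-algebras live in signatures without $\zr$ (and without $\mt, \jn$), so the conservativity must be stated only for terms in those reduced signatures; once the signatures are matched, the reduction to the already-established commutative cases is routine.
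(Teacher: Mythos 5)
Your reduction of the sircomonoid and BCI\hyp{}algebra cases to the case of commutative integrally closed residuated lattices via Theorem~\ref{thm:eq-MTRL-cons-ext-SIRM} and transitivity is exactly the paper's first step, and the ``extension'' direction (the $\zr$\hyp{}free reduct of a Casari algebra is a commutative integrally closed residuated lattice) is indeed immediate. The gap lies in your main step. From the fact that $\CA$ and the variety of commutative integrally closed residuated lattices both admit the Glivenko property with respect to $\AbLG$ you conclude that ``the two equational theories agree on the common signature,'' but this inference is invalid: the Glivenko property controls only the validity of the translated equations $\daltnot s \leq \daltnot t$ (resp.\ $\lnot\lnot s \leq \lnot\lnot t$), not of $s \leq t$ itself, and Proposition~\ref{prop:Glivenko-for-inequations-s-leq-ut} extends this only to equations of the special form $s \leq \ut$. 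Two varieties sharing a Glivenko companion need not agree on arbitrary equations in the common signature; the paper itself furnishes a counterexample to your reasoning pattern, since $\ICRL$ and $\LG$ both admit the Glivenko property with respect to $\LG$ (Corollary~\ref{cor:Glivenko:MTRL:vs:LG}, and trivially for $\LG$), yet $(x \ld \ut)\ld \ut \leq x$ holds in $\LG$ and fails in $\ICRL$. So nothing in your argument rules out the possibility that the extra axioms of $\CA$ (involutivity of $\lnot$ and $\zr\pd\zr\eq\zr$) force new $\zr$\hyp{}free equations that fail in some commutative integrally closed residuated lattice.

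What is needed for the conservativity direction --- and what the paper supplies --- is a different kind of argument: using cut\hyp{}elimination for $\pc{CA}$, one first shows by induction on cut\hyp{}free derivations that any derivable sequent containing no occurrence of $\zr$ has a non\hyp{}empty succedent, and then, by a second induction, that every cut\hyp{}free $\pc{CA}$\hyp{}derivation of a $\zr$\hyp{}free single\hyp{}conclusion sequent already lies in the restricted calculus $\pc{CIcRL}$, which is sound and complete for commutative integrally closed residuated lattices. Your proposal contains no substitute for this step; an algebraic alternative would require embedding every commutative integrally closed residuated lattice into the $\zr$\hyp{}free reduct of a Casari algebra, which is likewise nontrivial and not attempted.
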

\begin{proof}
The equational theory of commutative integrally closed residuated lattices is a conservative extension of the equational theories of sircomonoids and BCI\hyp{}algebras by Theorem~\ref{thm:eq-MTRL-cons-ext-SIRM}. Hence it suffices to show that the equational theory of  Casari algebras is a conservative extension of the equational theory of commutative integrally closed residuated lattices. 

Let $\pc{CIcRL}$ be the sequent calculus $\pc{CA}$ restricted to single\hyp{}conclusion sequents (i.e., sequents of the form $\Ga \seq t$ where $\zr$ does not occur in $\Ga$ or $t$). Then a single\hyp{}conclusion sequent is derivable in $\pc{CIcRL}$ if and only if it is valid in all commutative integrally closed residuated lattices. It therefore suffices to show that if a single\hyp{}conclusion sequent is derivable in $\pc{CA}$, it is derivable in $\pc{CIcRL}$. 

To this end, a simple induction on the height of a cut\hyp{}free derivation  shows that whenever a sequent $\Ga \seq \De$ not containing any occurrence of $\zr$ is derivable in $\pc{CA}$, the sequence $\De$ must be non\hyp{}empty. In particular, no sequent of the form $\Ga \seq$, where $\zr$ does not occur in $\Ga$, is derivable in $\pc{CA}$. But then a  straightforward induction on the height of a cut\hyp{}free derivation shows that any single\hyp{}conclusion sequent derivable in $\pc{CA}$, must also be derivable in $\pc{CIcRL}$.
\end{proof} 

\paragraph{Acknowledgements.} 
The research reported in this paper was supported by  Swiss National Science Foundation (SNF) grant 200021\textunderscore 184693.


\bibliographystyle{sl}


\AuthorAdressEmail{Jos\'{e} Gil\hyp{}F\'{e}rez}{Mathematisches Institut (MAI)\\
University of Bern\\
Sidlerstrasse 5\\
3012 Bern, Switzerland}{gilferez@gmail.com}

\AdditionalAuthorAddressEmail{Frederik M\"{o}llerstr\"{o}m Lauridsen}{Institute  for  Logic,  Language  and  Computation (ILLC) \\
 University  of  Amsterdam\\
P.O.  Box 94242\\
1090 GE Amsterdam, The Netherlands}{f.m.lauridsen@uva.nl}

\AdditionalAuthorAddressEmail{George Metcalfe}{Mathematisches Institut (MAI)\\
University of Bern\\
Sidlerstrasse 5\\
3012 Bern, Switzerland}{george.metcalfe@math.unibe.ch}

\end{document}